\definecolor{dkgreen}{rgb}{0, 0.6, 0}
\newtheorem{theorem}{Theorem}
\title{Network-inspired versus Kozeny-Carman based permeability-porosity relations applied to Biot's poroelasticity model}
\author{Menel Rahrah\thanks{Delft Institute of Applied Mathematics, Delft University of Technology, Delft, The Netherlands. Emails: \{M.Rahrah, L.A.Lopezpena, F.J.Vermolen, B.J.Meulenbroek\}@tudelft.nl} \and Luis A. Lopez-Pe\~na\footnotemark[1] \and Fred Vermolen\footnotemark[1] \and Bernard Meulenbroek\footnotemark[1]}
\date{\today}
\begin{document}
\maketitle

  \makeatother
  \pagestyle{myheadings}
  \markright{M. Rahrah et al. \quad Permeability-porosity relations applied to Biot's model}

\begin{abstract}
Water injection in the aquifer induces deformations in the soil. These mechanical deformations give rise to a change in porosity and permeability, which results in non-linearity of the mathematical problem. Assuming that the deformations are very small, the model provided by Biot's theory of linear poroelasticity is used to determine the local displacement of the skeleton of a porous medium, as well as the fluid flow through the pores. In this continuum scale model, the Kozeny-Carman equation is commonly used to determine the permeability of the porous medium from the porosity. The Kozeny-Carman relation states that flow through the pores is possible at a certain location as long as the porosity is larger than zero at this location in the aquifer. However, from network models it is known that percolation thresholds exist, indicating that the permeability will be equal to zero if the porosity becomes smaller than these thresholds. In this paper, the relationship between permeability and porosity is investigated. A new permeability-porosity relation, based on the percolation theory, is derived and compared with the Kozeny-Carman relation. The strongest feature of the new approach is related to its capability to give a good description of the permeability in case of low porosities. However, with this network-inspired approach small values of the permeability are more likely to occur. Since we show that the solution of Biot's model converges to the solution of a saddle point problem for small time steps and low permeability, we need stabilisation in the finite element approximation.

\textbf{Keywords:} Kozeny-Carman relation, Percolation threshold, Biot's poroelasticity model, Finite element method, Saddle point problem, Spurious nonphysical oscillations
\end{abstract}

\section{Introduction} \label{Intro}

For the description of different physical processes, such as consolidation, it is of a pivotal importance to have a valid estimation of permeability. The permeability of porous media is usually expressed as a function of some physical properties of the interconnected pore system such as porosity. Although it is natural to assume that the permeability depends on the porosity, it is not simple to formulate satisfactory theoretical models for the relation between them, mainly due to the complexity of the connected pore space. One of the most widely used permeability-porosity relationships is the Kozeny-Carman relation~\cite{CARM37,KOZE27}. This relation assumes that the connectivity of a porous space does not vary in time, either by assuming a pore space that stays fully connected or by taking the effective porosity initially and assuming no loss of connectivity. Therefore, the Kozeny-Carman relation assumes that flow through the porous medium is possible as long as the porosity is nonzero. Hence, this relation is not capable of predicting blocking of the flow if the porosity is too low in some parts of the porous medium leading to two or more disconnected regions. Moreover, it is empirically proven that the permeability decreases dramatically with decreasing porosity~\cite{BBE82}, indicating that the Kozeny-Carman relation is less accurate at low porosities. To improve the behaviour of this relation for small values of the porosity, Mavko and Nur~\cite{MANU97} incorporated a simple porosity adjustment into the Kozeny-Carman relation, by taking the percolation threshold into account. This approach resulted in a better prediction of the permeability for low porosities. However, in the work of Mavko and Nur the percolation threshold was chosen empirically to give a good fit for the experimental data. Another approach to take into account the percolation threshold was presented by Porter et al.~\cite{PRMD13}, based on the work of Koltermann and Gorelick~\cite{KOGO95}, who adapted the Kozeny-Carman equation to represent bimodal grain-size mixtures.

The Kozeny-Carman equation is based on only having spherical grains in the porous medium, whereas these grains can have various shapes. In this sense, the Kozeny-Carman relation represents a limit case. Another limit case is the assumption that the voids between the grains are represented by straight channels. In this study, we briefly introduce a new approach for the permeability that is derived on a micro-scale network model. At the pore-scale, this network model offers a detailed description of the porous medium~\cite{BEEW98}. The current modelling framework deals with the latter limit case, and can be used for general network topologies (such as rectangular, triangular and cubic arrangements of the channels). In contrast with the Kozeny-Carman equation, the new network-inspired approach yields that the permeability is nonzero only if the porosity is larger than a specific percolation threshold, that depends on the topology of the network. This means that the new approach may give a better prediction of the permeability in physics problems where abrupt changes in the porous medium occur resulting in a loss of connectivity of the connected pore space. In addition, the current model provides a computational framework to determine the percolation threshold for any given network with straight channels, which implies that there is no need for fitting on the basis of experiments if the network topology is known. This numerically determined threshold value agrees very well with the known values of the percolation thresholds from the literature. The new permeability-porosity approach is derived from percolation theory, which is a branch of probability that describes the effects of randomness arising from the porous media structure~\cite{BRHA57}.

In percolation theory, the nodes of the network are called sites and the edges are called bonds. This leads to two approaches: site percolation and bond percolation. In this paper, we consider the bond percolation approach, whose basic idea can be explained as follows. Consider a network such as shown in Fig.~\ref{Fig:Rectangular_network}. Whether a bond is open or closed depends on a certain probability and it is independent of the neighbouring bonds. If one bond is open and the nearest neighbours are closed, it is said that a 1-cluster is formed. If two adjacent bonds are open, they form a 2-cluster and so on. If the probability $p$ of a bond being open increases, then larger clusters are created. There exists a critical probability at which the first cluster that spans the entire network from the inlet to the outlet is obtained. For infinite networks, this critical probability is well defined and it is called the percolation threshold~$p_c$~\cite{BEEW98}.

\begin{figure}[!ht]
    \centering
    \includegraphics[width=0.27\textwidth]{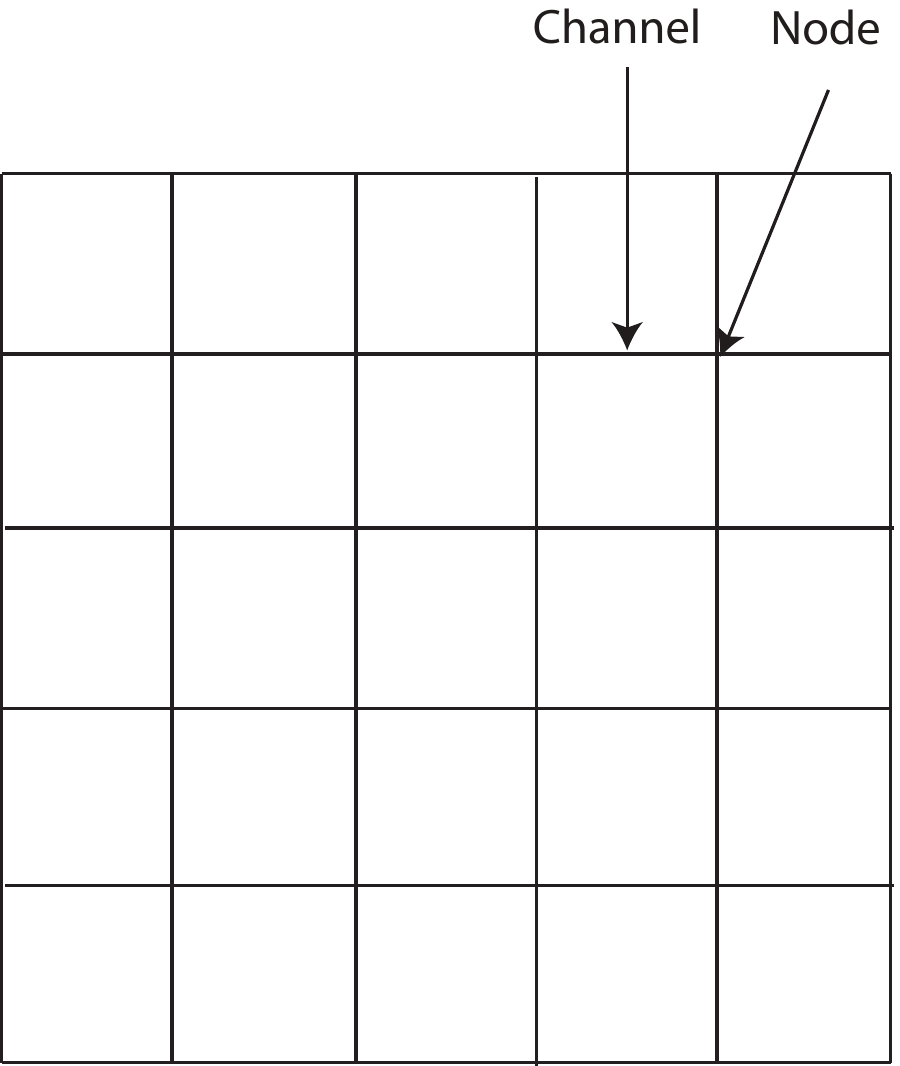}
    \caption{An illustration of a rectangular network.} \label{Fig:Rectangular_network}
\end{figure}

Some macroscopic properties of porous media are mainly determined by the connectivity of the pore system~\cite{BEEW98}. Hence, it is possible to find a relation between the permeability of a porous medium and the percolation properties using the network model. For instance, if each of the open bonds conducts a fluid and there exists a cluster that spans the network from the inlet to the outlet, then a volumetric flow is possible through the network. By adding more open bonds to the cluster, the volumetric flow through the network may increase. Since the number of open bonds represents the porosity of the network, it is possible to find a relation between the volumetric flow and the porosity~\cite{BALB87,WONG88}. In addition, Darcy's law gives a relationship between the permeability of the porous medium and the volumetric flow. As a result, a relation between the permeability and the porosity can be derived.

The applicability of these permeability-porosity relations will be demonstrated using two illustrative numerical examples. In these examples, flow of an incompressible fluid through a linearly elastic, saturated porous medium is modelled, using the classical theory of poroelasticity. Originally developed by Biot~\cite{BIOT41}, poroelasticity theory assumes a superposition of solid and fluid components and couples the mechanical deformation of a porous solid with fluid flow through its internal structure. This approach is valid in the infinitesimal deformation range. Poroelasticity problems exist widely in the real world, making this theory of a great interest due to its applicability in various branches of science and engineering (see e.g.~\cite{BBBG16,CHEN16,RAVE17,SPIE93,SDHH12}).

In order to investigate the difference between the Kozeny-Carman approach and the equation based on the percolation theory, the boundary conditions in both academic poroelasticity examples are chosen such that a decrease in porosity is realised in some parts of the computational domain. This decrease in porosity will lead in both relations to a decrease in the permeability of the porous medium. When using the finite element method to solve the poroelastic equations, it is well known that the numerical solution of these equations may exhibit nonphysical oscillations in the pressure field for low permeabilities and short time steps~\cite{HOL12,VEVE81}. In Sect.~\ref{Sec:Proof}, we will prove that under these conditions, the resulting finite element discretisation approaches saddle point problems. Hence, a considerate numerical methodology in terms of possible spurious oscillations is needed. Therefore, a Galerkin finite element method based on Taylor-Hood elements has been developed, combined with the stabilisation technique proposed in~\cite{AGLR08,RGHZ16}. Another stabilised finite element method is employed by Wan~\cite{WAN03}, Korsawe and Starke~\cite{KOST05} and Tchonkova et al.~\cite{TPS07}, based on the Galerkin least-squares method.

Poroelasticity problems have been attracting attention from the scientific computing community~\cite{HRGZ17,WXY14} (and references therein). Another numerical methods that are used to solve the poroelasticity equations are the finite volume method combined with a nonlinear multigrid method as adopted by Luo et al.~\cite{LRGO15}. In addition, stabilised finite difference methods using central differences on staggered grids are used by Gaspar et al.~\cite{GLV03,GLV06} to solve Biot's model. However, the numerical solution of poroelasticity equations has been traditionally associated with finite element methods~\cite{BRK17,PHWH07}. Furthermore, a monolithic approach for solving the quasi-static two-field poroelasticity equations is employed in this paper, which involves solving the coupled governing equations of flow and geomechanics simultaneously at every time step. Another approach that is widely used in coupling the flow and the mechanics in porous media is the fixed stress split method~\cite{BBNK17,MIWH13}. Hong et al.~\cite{HKLP19,HKLW19} applied the fixed stress splitting scheme to multiple-network poroelasticity systems.

% Governing equations %%%%%%%%%%%%%%%%%%%%%%%%%%%%%%%%%%%%%%%%%%%%%

\section{Governing equations} \label{Sec:Governing_equations}

The model provided by Biot's theory of linear poroelasticity with single-phase flow~\cite{BIOT41} is used in this study to determine the local displacement of the grains of a porous medium and the fluid flow through the pores, assuming that the deformations are very small. The fluid-saturated porous medium has a linearly elastic solid matrix and is saturated by an incompressible Newtonian fluid. Let $\Omega \subset \mathbb{R}^2$ denote the computational domain with boundary $\Gamma$, and $\mathbf{x} = (x, y) \in \Omega$. Furthermore, $t$ denotes time, belonging to a half-open time interval $I = (0, T]$, with $T > 0$. The initial boundary value problem for the consolidation process of an incompressible fluid flow in a deformable porous medium is stated as follows~\cite{AGLR08,WANG00}:
\begin{subequations}
  \begin{align}
    -\nabla \cdot \boldsymbol{\sigma}' + \nabla p = \mathbf{0}				&\mbox{\ on\ } \Omega \times I; \label{Eq:equilibrium} \\
    \frac{\partial}{\partial t} (\nabla \cdot \mathbf{u}) + \nabla \cdot \mathbf{v} = 0	&\mbox{\ on\ } \Omega \times I, \label{Eq:continuity}
  \end{align} \label{Eq:Biot}
\end{subequations}
where $\boldsymbol{\sigma}'$ and $\mathbf{v}$ are defined by the following equations
\begin{align}
  \boldsymbol{\sigma}' &= \lambda \mathrm{tr}(\boldsymbol{\varepsilon}) \mathbf{I} + 2\mu \boldsymbol{\varepsilon}; \label{Eq:constitutive} \\
  \boldsymbol{\varepsilon} &= \frac{1}{2}(\nabla \mathbf{u} + \nabla \mathbf{u}^T); \\
  \mathbf{v} &= -\frac{\kappa}{\eta} \nabla p. \label{Eq:Darcy}
\end{align}
In the above relations, $\boldsymbol{\sigma}'$ and $\boldsymbol{\varepsilon}$ denote the effective stress and strain tensors, $p$ the pore pressure, $\mathbf{u}$ the displacement vector, $\mathbf{v}$ Darcy's velocity, $\lambda$ and $\mu$ the Lam\'e coefficients; $\kappa$ the permeability of the porous medium and $\eta$ the fluid viscosity. The parameter values used in this paper are given in Table~\ref{Tab:MaterialValues}. In addition, appropriate boundary and initial conditions are specified in Sect.~\ref{Sec:Problem_formulation}.

% Permeability-porosity relation %%%%%%%%%%%%%%%%%%%%%%%%%%%%%%%%%%%%%%%%%

\section{The permeability-porosity relations} \label{Sec:Porosity_permeability_relations}

In this study, we consider the spatial dependency of the porosity and the permeability of the porous medium. The porosity $\theta$ is computed from the displacement vector using the porosity-dilatation relation (see~\cite{RAVE18,TCH06})
\begin{equation}
  \theta(\mathbf{x}, t) = 1 - \frac{1 - \theta_0}{\exp(\nabla \cdot \mathbf{u})}, \label{Eq:porosity-dilatation}
\end{equation}
with $\theta_0$ the initial porosity. Subsequently, the permeability can be determined using the Kozeny-Carman equation~\cite{WAHS09}
\begin{equation}
  \kappa(\mathbf{x}, t) = \frac{d_s^2}{180} \frac{\theta(\mathbf{x}, t)^3}{(1 - \theta(\mathbf{x}, t))^2}, \label{Eq:KC}
\end{equation}
where $d_s$ is the mean grain size of the soil. The Kozeny-Carman relation assumes that the permeability becomes zero if and only if the porosity also becomes zero. A new approach for the relation between the porosity and the permeability is inspired by the fluid flow through a network, where the fluid flows into the edges (channels) of the network. The network-inspired relation takes into account that a certain number of the channels are removed randomly, therefore the fluid can not flow through those particular channels causing an alteration in the permeability and the porosity of the network. The number of removed channels increases from $1\%$ of the total number of channels to $100\%$. For a certain number of removed channels, there are no connected paths anymore between the inlet and the outlet. In this case, the fluid will stop flowing and the permeability will be expected to become zero. For an arbitrary network topology, the network-inspired relation states:
\begin{equation}
  \kappa(\mathbf{x}, t) = \begin{cases} 0 & \theta < \hat{\theta} \\ \frac{\theta - \hat{\theta}}{\theta_0 - \hat{\theta}}\kappa_0 & \theta \geq \hat{\theta} \end{cases}, \label{Eq:NI}
\end{equation}
where $\kappa_0$ is the initial permeability. The percolation threshold porosity~$\hat{\theta} = p_c \theta_0$, represents the minimal porosity needed to have connection via voids or channels from one end to the other, and is dependent on the topology of the network. For $\hat{\theta} = 0.5 \theta_0$, the normalised permeabilities ($\kappa/\kappa_0$) for the network-inspired relation and the Kozeny-Carman relation are depicted in Fig.~\ref{Fig:Examples_relations} as function of the normalised porosity ($\theta/\theta_0$). In the coming section, we will show how the permeability-porosity relation~\eqref{Eq:NI} is derived using a network model.

\begin{figure}[!ht]
  \centering
  \includegraphics[scale=0.2]{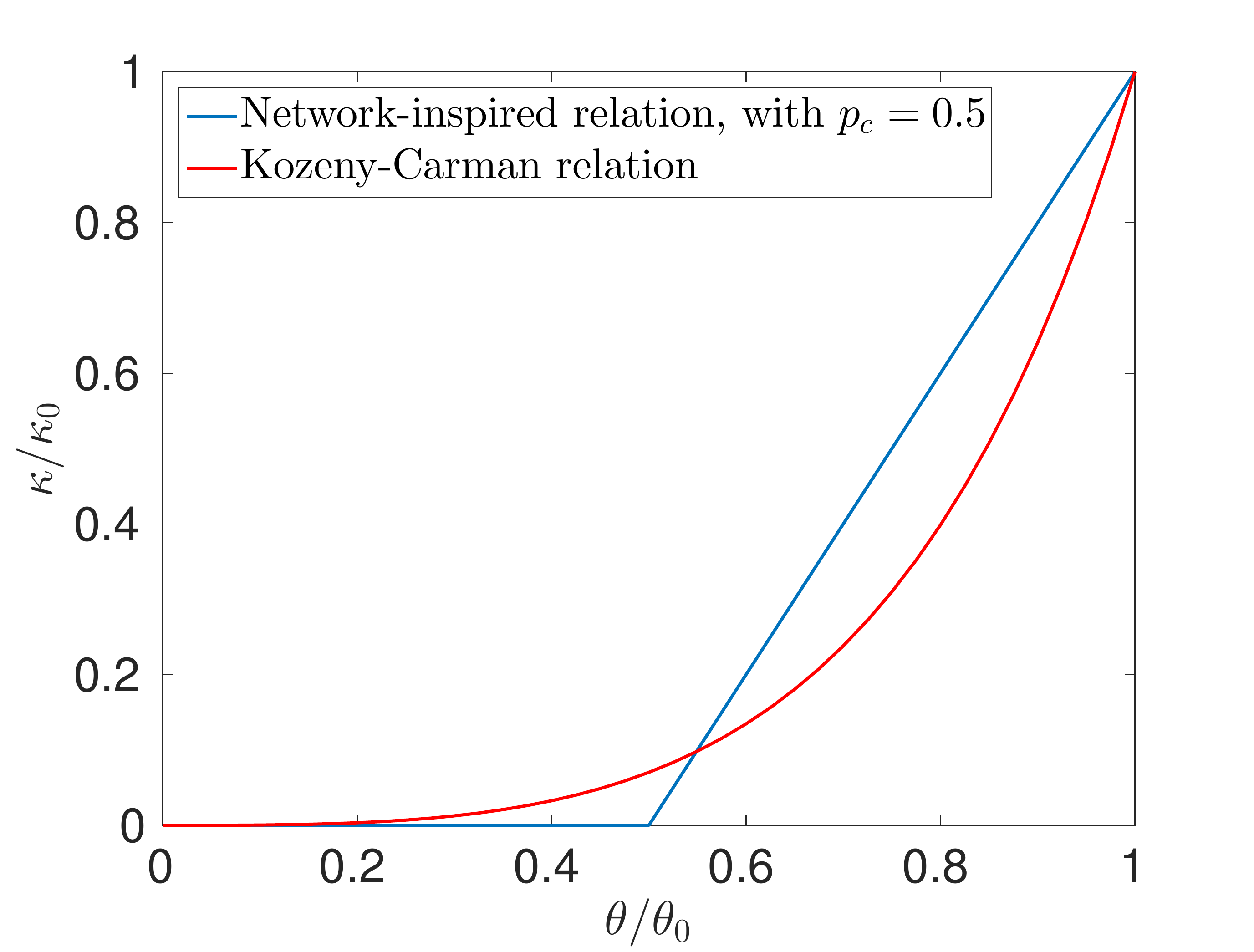}
  \caption{The normalised permeability as function of the normalised porosity for the Kozeny-Carman relation and the network-inspired relation, with $\hat{\theta} = 0.5 \theta_0$.} \label{Fig:Examples_relations}
\end{figure}

\subsection{The network-inspired permeability-porosity relation}

In this section, we explain the procedure followed to obtain the network-inspired permeability-porosity relation. According to Balberg~\cite{BALB87} and Wong~\cite{WONG88}, it is possible to obtain a relation between the number of open bonds (or channels) in the network $N_o$ and the flow rate through the network $Q$. This relation is determined by a power law as follows:
\begin{equation}
  Q \propto (N_o-\hat{N})^{b} \quad \mbox{for} \quad N_o>\hat{N}, \label{Eq:flow_percolation}
\end{equation}
in which $\hat{N}$ is the number of bonds corresponding with the percolation threshold porosity and $b$ is an exponent that can be determined by theory or via computer simulations. A similar relation between the permeability $\kappa$ and the porosity~$\theta$ will be obtained in this section.

First, we consider a network with all channels open and with a pressure gradient $\Delta p$ imposed over the horizontal direction of the network. Second, we assume that mass conservation holds in each node of the network $n_i$, hence
\begin{equation}
  \sum_{j \in S_i} q_{ij} = 0, \label{Eq:mass_conservation}
\end{equation}
where $S_i = \{ j\ | \mbox{\ node\ } n_{j} \mbox{ is adjacent to node } n_i\}$, and $q_{ij}$ is the flow rate in the channels connected to node $n_i$. This flow rate is given by the Poiseuille flow
\begin{equation}
  q_{ij} = \frac{\pi r^4}{8\eta l} \Delta p_{ij}, \label{Eq:Poiseuille}
\end{equation}
where $r$ and $l$ are the radius and the length of a channel between two neighbouring nodes, respectively, and $\Delta p_{ij}$ is the pressure drop.

A linear system for the pressure $p_i$ in each node arises from substituting Eq.~\eqref{Eq:Poiseuille} in Eq.~\eqref{Eq:mass_conservation}. This system is solved via a direct method and, subsequently, the flow rate in each channel is computed via Eq.~\eqref{Eq:Poiseuille}. Finally, the flow rate through the network $Q$ is computed by the summation of the flow rates in the channels that are connected with the outlet of the network. After this, we randomly close the channels, starting with $1\%$ of the total number of channels in the network until that $100\%$ of the channels is closed. In each stage, 500 simulations are performed and in each simulation, the linear system for the pressure is solved and the flow rate through the network is computed. From the computed flow rate $Q$, the permeability of the network can be determined using Darcy's law
\begin{equation}
  \kappa = -\frac{Q}{A} \frac{\eta L}{\Delta p},
\end{equation}
where $A$ is the cross-sectional area of the network and $L$ is the length over which the pressure gradient is taking place. In addition, there is a direct relation between the porosity of the network and the volume of the open channels $V_o$
\begin{equation}
  \theta = \frac{V_o}{V_t}\theta_0,
\end{equation}
in which $V_t$ is the total volume of the channels. This procedure yields a relation between the permeability and the porosity of the network. In the coming sections, this procedure will be demonstrated for three two-dimensional networks: rectangular, triangular and triangular unstructured.

\subsubsection{Rectangular network}

We start by describing the results obtained for a rectangular network (see Fig.~\ref{Fig:Rectangular_network}). In this case, we use a network with $N_x = 100$ horizontal nodes and $N_y = 60$ vertical nodes. We computed the fraction of closed channels $f_c = V_c/V_t$, that is needed to obtain a normalised permeability $\kappa_n = \kappa/\kappa_0$ such that $k_i - 0.05 < \kappa_n < k_i + 0.05$, where $k_i \in \{0.1, 0.2, \ldots, 0.9\}$. In Fig.~\ref{Fig:Rectangular}, the histograms for $k_i = 0.1$, $k_i = 0.5$ and $k_i = 0.9$ are shown. The mean $\bar{\mu}$ and the standard deviation $\sigma$ of the distribution of $f_c$ depend on the normalised permeability $\kappa_n$ as presented in Table~\ref{table6.1}.

\begin{figure}[!ht]
  \centering
  \subfloat[The histogram of $f_c$ that satisfies $0.05 < \kappa_n < 0.15$.]{\includegraphics[width=0.4\textwidth]{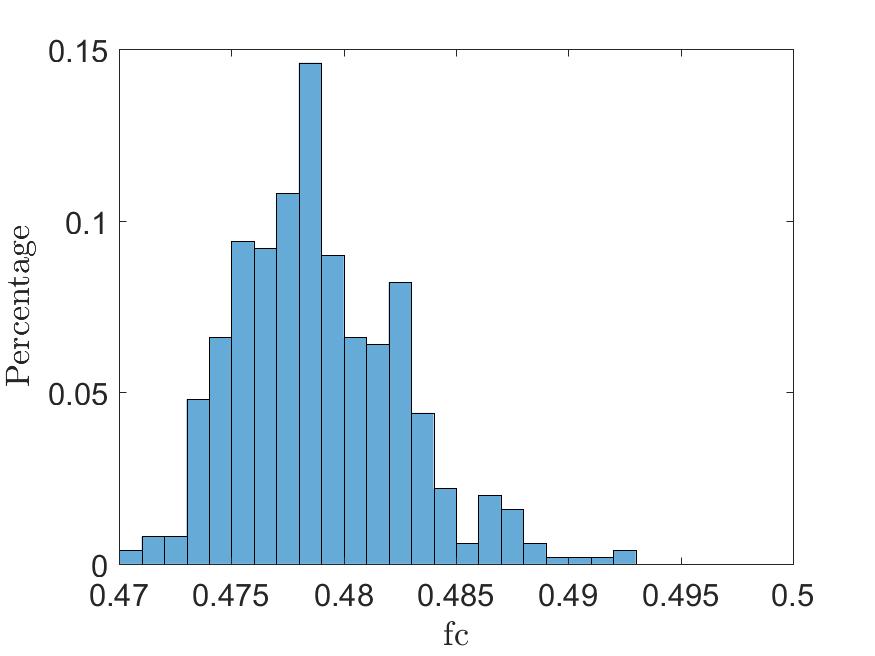}}%
  \qquad
  \subfloat[The histogram of $f_c$ that satisfies $0.45 < \kappa_n < 0.55$.]{\includegraphics[width=0.4\textwidth]{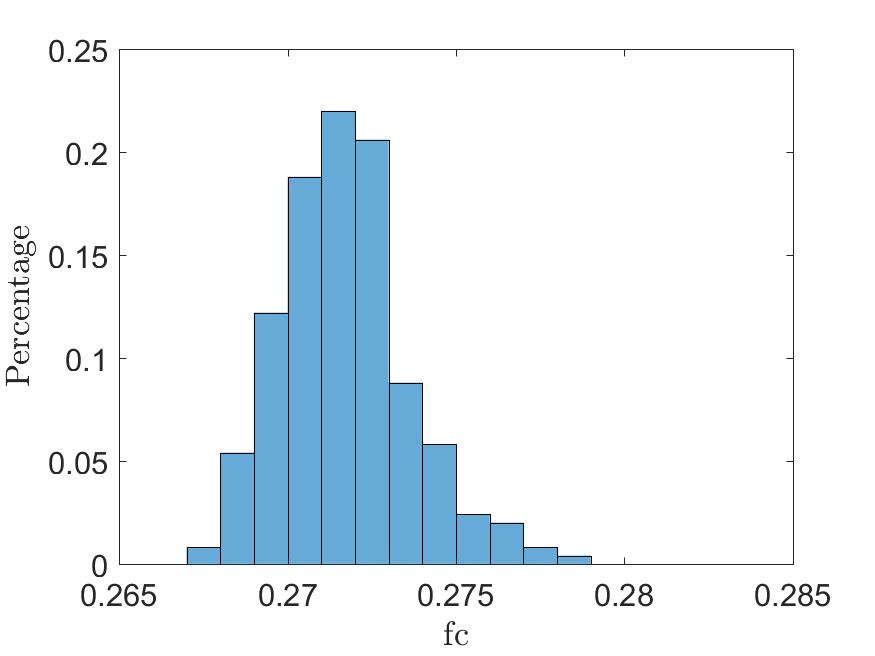}}%
  \\
  \subfloat[The histogram of $f_c$ that satisfies $0.85 < \kappa_n < 0.95$.]{\includegraphics[width=0.4\textwidth]{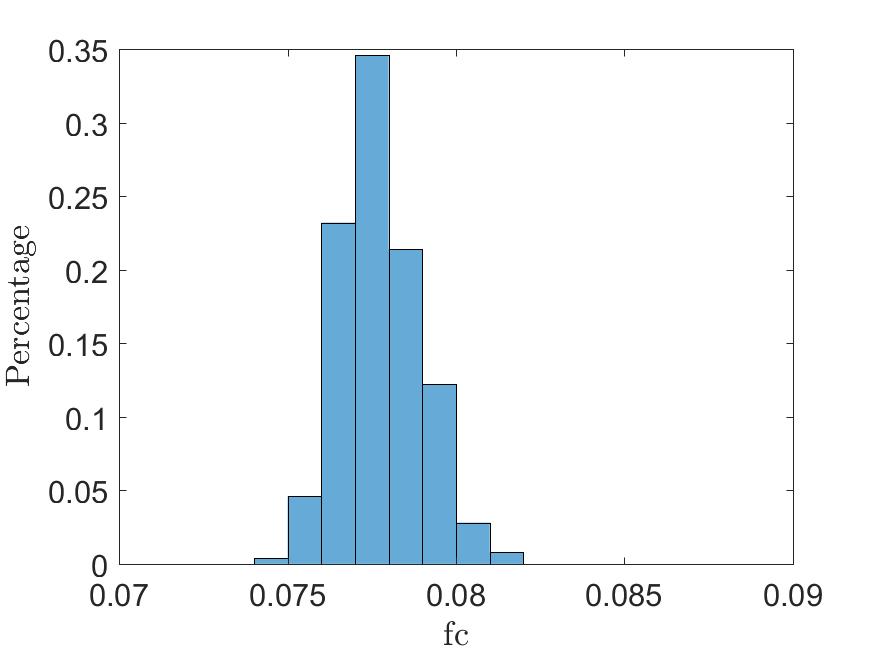}}
  \caption{The histograms of the fraction of closed channels $f_c$ for the rectangular network.} \label{Fig:Rectangular}
\end{figure}

\begin{table}[ht!]
  \centering
  \caption{The mean $\bar{\mu}$ and the standard deviation $\sigma$ of the distribution of $f_c$ for the rectangular network.} \label{table6.1}
  \begin{tabular}{lll}
    \hline\noalign{\smallskip}
    		 				& $\bar{\mu}$ 	& $\sigma$  \\
    \noalign{\smallskip}\hline\noalign{\smallskip}
    $0.05 < \kappa_n < 0.15$ 	& $0.4789$ 	& $0.0037$  \\
    $0.15 < \kappa_n < 0.25$ 	& $0.4188$ 	& $0.0026$  \\
    $0.25 < \kappa_n < 0.35$ 	& $0.3678$ 	& $0.0023$  \\
    $0.35 < \kappa_n < 0.45$	& $0.3195$ 	& $0.0022$  \\
    $0.45 < \kappa_n < 0.55$ 	& $0.2717$ 	& $0.0019$  \\
    $0.55 < \kappa_n < 0.65$ 	& $0.2240$ 	& $0.0018$  \\
    $0.65 < \kappa_n < 0.75$ 	& $0.1760$ 	& $0.0016$  \\
    $0.75 < \kappa_n < 0.85$ 	& $0.1273$ 	& $0.0014$  \\
    $0.85 < \kappa_n < 0.95$ 	& $0.0777$ 	& $0.0012$  \\
    \noalign{\smallskip}\hline
  \end{tabular}
\end{table}

\subsubsection{Triangular network}

In this section, we present the results obtained with a triangular network as shown in Fig.~\ref{Fig:Triangular_network}. The number of nodes in the horizontal direction $N_x = 100$ and the number of nodes in the vertical direction $N_y = 60$. The coordination number of the interior nodes is eight or four (see Fig.~\ref{Fig:Triangular_network}). In Fig.~\ref{Fig:Triangular}, the histograms for $k_i = 0.1$, $k_i = 0.5$ and $k_i = 0.9$ are depicted. In addition, the values of the mean $\bar{\mu}$ and the standard deviation~$\sigma$ of the distribution of $f_c$ are presented in Table~\ref{table6.2}.

\begin{figure}[!ht]
    \centering
    \includegraphics[width=0.27\textwidth]{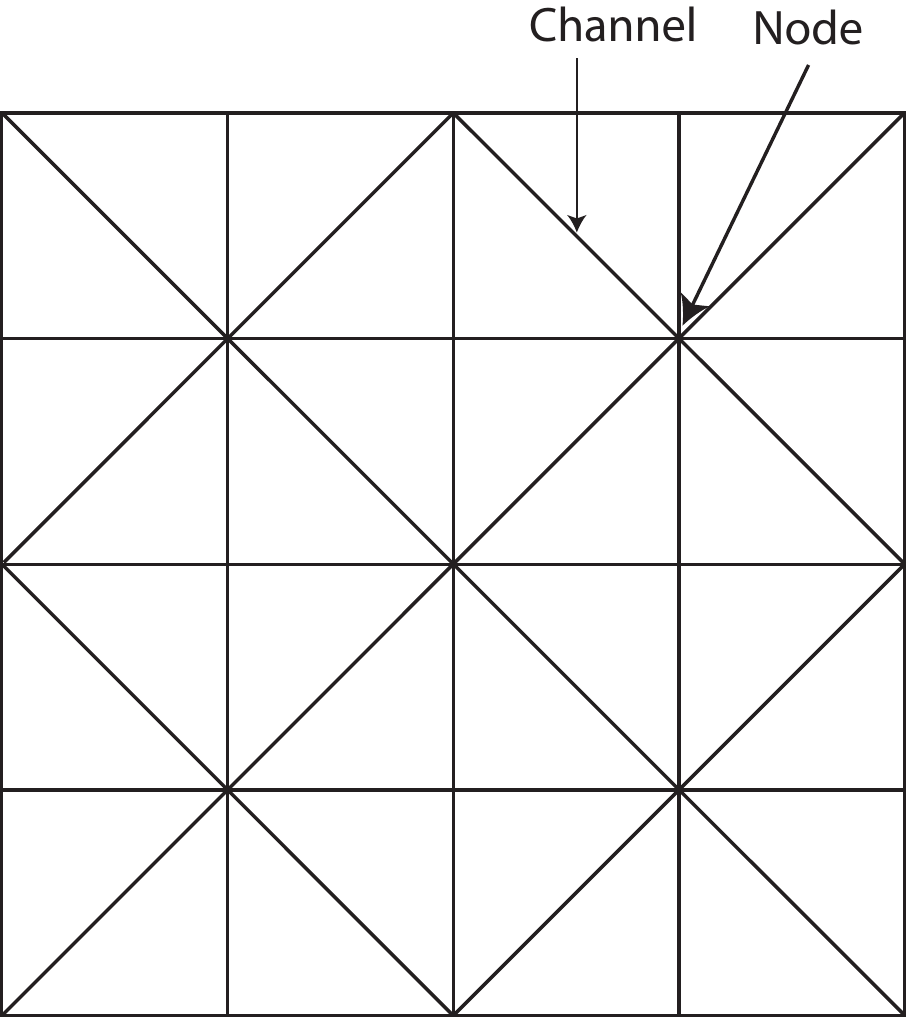}
    \caption{An illustration of a triangular network.} \label{Fig:Triangular_network}
\end{figure}

\begin{figure}[!ht]
  \centering
  \subfloat[The histogram of $f_c$ that satisfies $0.05 < \kappa_n < 0.15$.]{\includegraphics[width=0.4\textwidth]{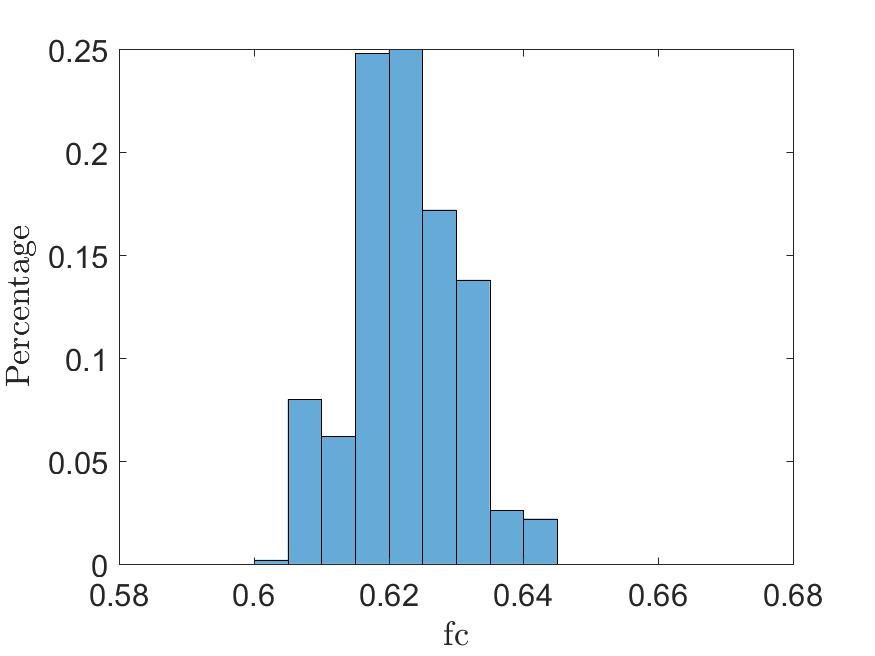}}%
  \qquad
  \subfloat[The histogram of $f_c$ that satisfies $0.45 < \kappa_n < 0.55$.]{\includegraphics[width=0.4\textwidth]{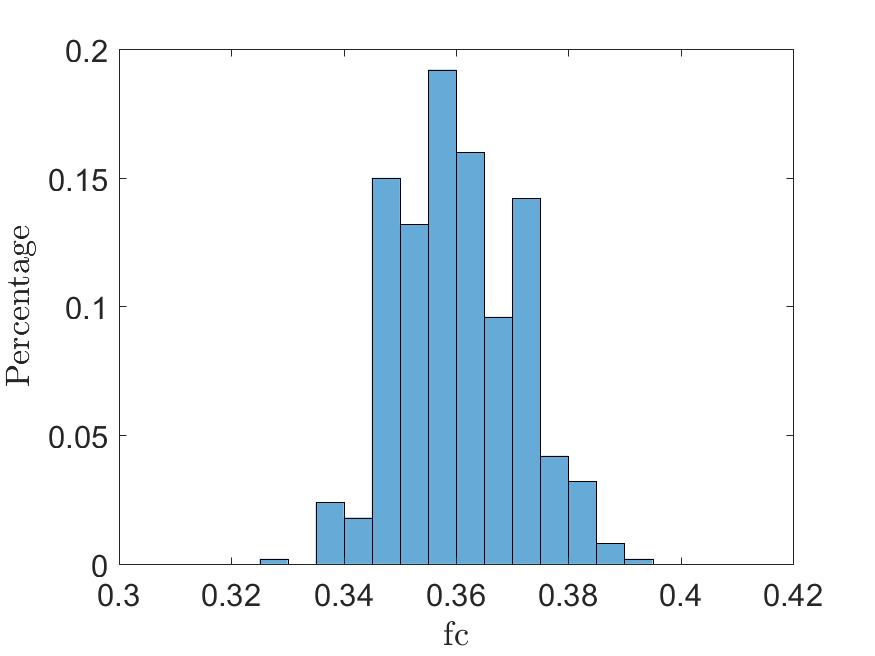}}%
  \\
  \subfloat[The histogram of $f_c$ that satisfies $0.85 < \kappa_n < 0.95$.]{\includegraphics[width=0.4\textwidth]{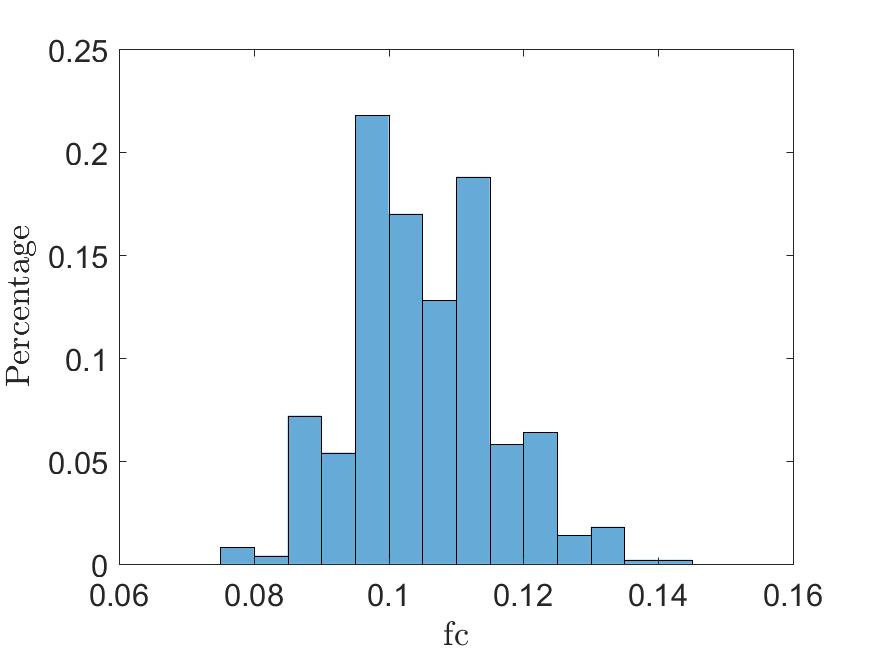}}
  \caption{The histograms of the fraction of closed channels $f_c$ for the triangular network.} \label{Fig:Triangular}
\end{figure}

\begin{table}[ht!]
  \centering
  \caption{The mean $\bar{\mu}$ and the standard deviation $\sigma$ of the distribution of $f_c$ for the triangular network.} \label{table6.2}
  \begin{tabular}{lll}
    \hline\noalign{\smallskip}
    		 				& $\bar{\mu}$ 	& $\sigma$  \\
    \noalign{\smallskip}\hline\noalign{\smallskip}
    $0.05 < \kappa_n < 0.15$ 	& $0.6226$ 	& $0.0077$  \\
    $0.15 < \kappa_n < 0.25$ 	& $0.5522$ 	& $0.0086$  \\
    $0.25 < \kappa_n < 0.35$ 	& $0.4882$ 	& $0.0099$  \\
    $0.35 < \kappa_n < 0.45$ 	& $0.4240$ 	& $0.0102$  \\
    $0.45 < \kappa_n < 0.55$ 	& $0.3604$ 	& $0.0105$  \\
    $0.55 < \kappa_n < 0.65$ 	& $0.2975$ 	& $0.0108$  \\
    $0.65 < \kappa_n < 0.75$ 	& $0.2333$ 	& $0.0111$  \\
    $0.75 < \kappa_n < 0.85$ 	& $0.1687$ 	& $0.0104$  \\
    $0.85 < \kappa_n < 0.95$ 	& $0.1052$ 	& $0.0103$  \\
    \noalign{\smallskip}\hline
  \end{tabular}
\end{table}

\subsubsection{Triangular unstructured network}

Finally, we present the results obtained with a triangular unstructured network such as shown in Fig.~\ref{fig_6.20}. The total number of nodes in this network is $6921$. The histograms of the fraction of closed channels for some values of the normalised permeability are depicted in Fig.~\ref{Fig:Unstructured_triangular}. Moreover, in Table~\ref{table6.3}, the values of the mean $\bar{\mu}$ and the standard deviation~$\sigma$ of the distribution of $f_c$ are given.

\begin{figure}
    \centering
    \includegraphics[width=0.3\textwidth]{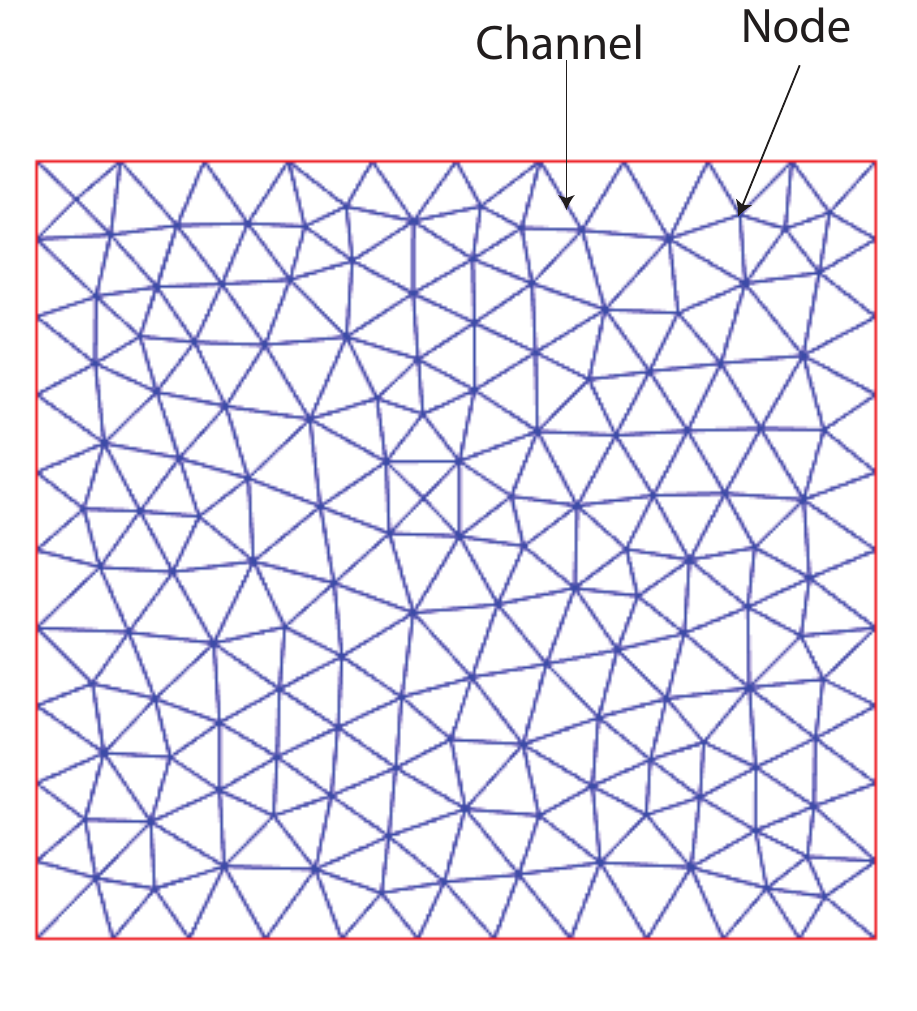}
    \caption{An illustration of a triangular unstructured network.} \label{fig_6.20}
\end{figure}

\begin{figure}[!ht]
  \centering
  \subfloat[The histogram of $f_c$ that satisfies $0.05 < \kappa_n < 0.15$.]{\includegraphics[width=0.4\textwidth]{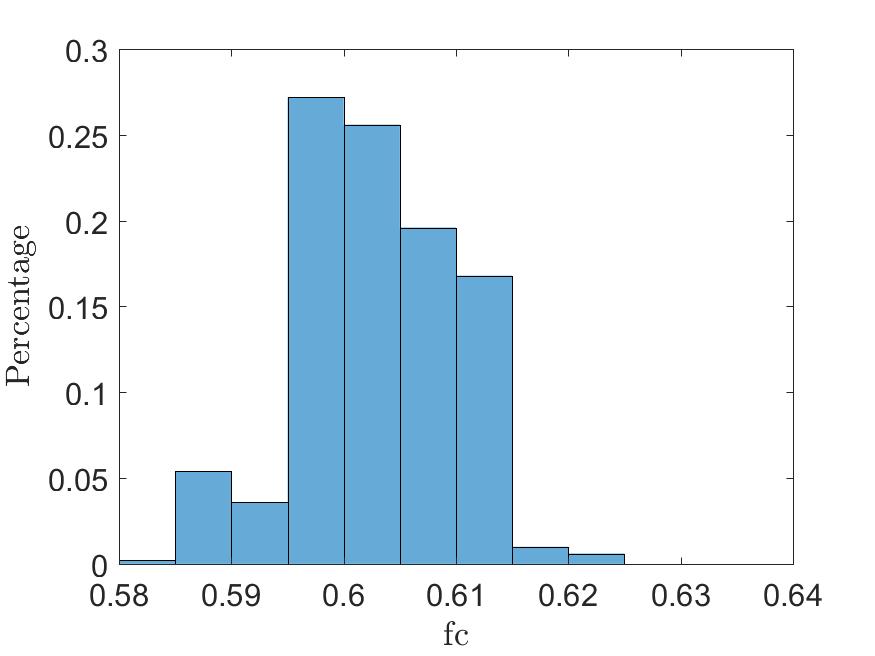}}%
  \qquad
  \subfloat[The histogram of $f_c$ that satisfies $0.45 < \kappa_n < 0.55$.]{\includegraphics[width=0.4\textwidth]{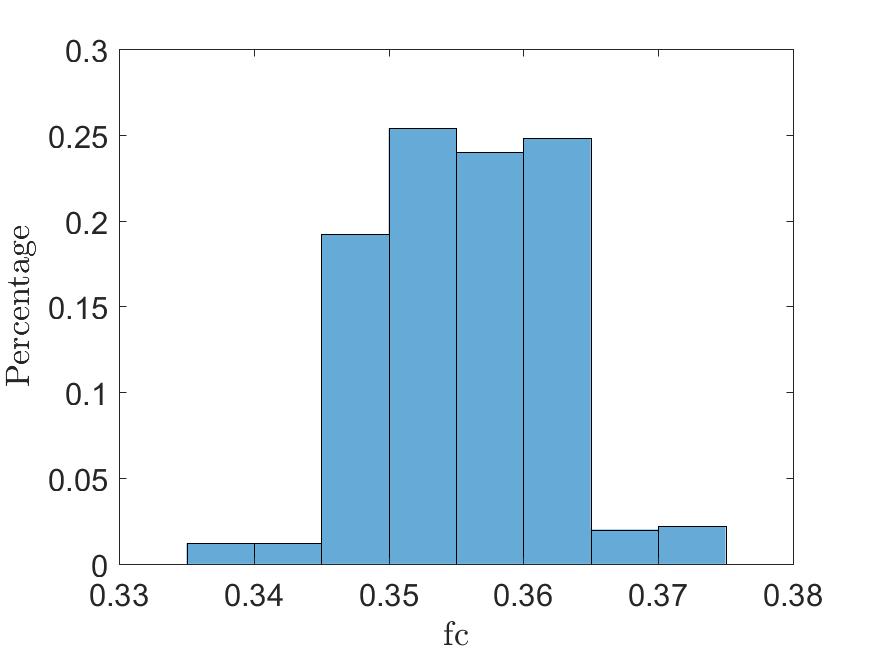}}%
  \\
  \subfloat[The histogram of $f_c$ that satisfies $0.85 < \kappa_n < 0.95$.]{\includegraphics[width=0.4\textwidth]{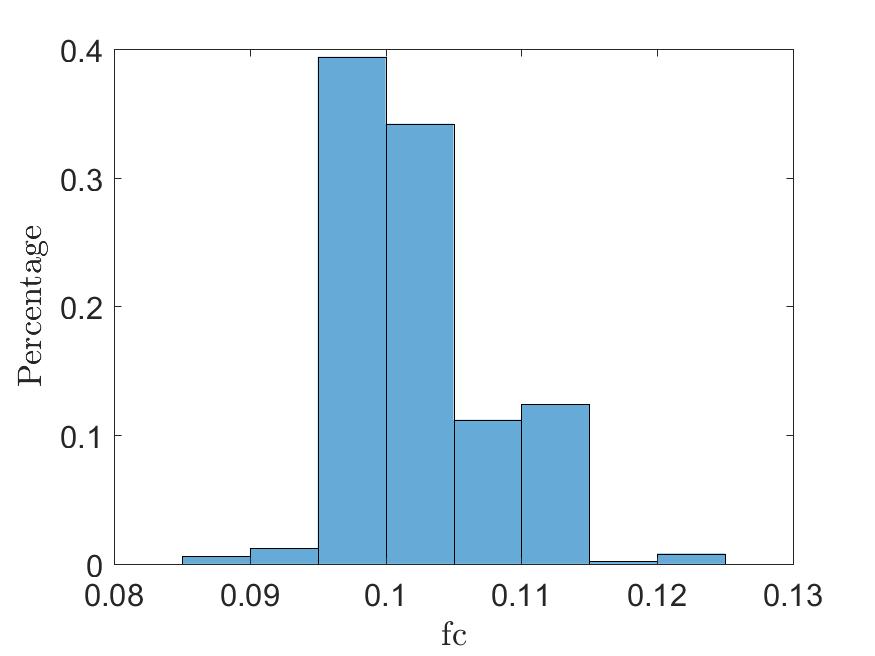}}
  \caption{The histograms of the fraction of closed channels $f_c$ for the triangular unstructured network.} \label{Fig:Unstructured_triangular}
\end{figure}

\begin{table}[ht!]
  \centering
  \caption{The mean $\bar{\mu}$ and the standard deviation $\sigma$ of the distribution of $f_c$ for the triangular unstructured network.} \label{table6.3}
  \begin{tabular}{lll}
    \hline\noalign{\smallskip}
    		 				& $\bar{\mu}$ 	& $\sigma$  \\
    \noalign{\smallskip}\hline\noalign{\smallskip}
    $0.05 < \kappa_n < 0.15$ 	& $0.6030$ 	& $0.0066$  \\
    $0.15 < \kappa_n < 0.25$ 	& $0.5385$ 	& $0.0062$  \\
    $0.25 < \kappa_n < 0.35$ 	& $0.4773$ 	& $0.0059$  \\
    $0.35 < \kappa_n < 0.45$ 	& $0.4168$ 	& $0.0063$  \\
    $0.45 < \kappa_n < 0.55$ 	& $0.3555$ 	& $0.0062$  \\
    $0.55 < \kappa_n < 0.65$ 	& $0.2938$ 	& $0.0063$  \\
    $0.65 < \kappa_n < 0.75$ 	& $0.2308$ 	& $0.0061$  \\
    $0.75 < \kappa_n < 0.85$ 	& $0.1667$ 	& $0.0059$  \\
    $0.85 < \kappa_n < 0.95$ 	& $0.1024$ 	& $0.0049$  \\
    \noalign{\smallskip}\hline
  \end{tabular}
\end{table}

This method is also used to compute $f_c$ for $\kappa_n = 0$. The mean $\bar{\mu}$ of the smallest value of $f_c$ for which holds $\kappa_n = 0$ is depicted in Table~\ref{table6.4}. Since $\theta/\theta_0 = 1 - f_c$, we observe from the results obtained with the different networks that the permeability becomes zero for porosities below a certain threshold. These percolation thresholds are different for the different networks used in this study, as can be seen in Table~\ref{table6.4}. The values are in good agreement with the literature~\cite{STWE19}. We also observe that, for all three networks, the relation between the permeability and the porosity exhibits an almost linear increase for values of the porosity larger than the percolation threshold. Moreover, the slope of these linear lines depends on the percolation threshold and hence on the network topology. The normalised permeabilities, for the different network topologies used in this paper and for the Kozeny-Carman equation, are depicted in Fig.~\ref{fig_6.40}.

\begin{table}[ht!]
  \centering
  \caption{The percolation threshold for different network topologies.} \label{table6.4}
  \begin{tabular}{lll}
    \hline\noalign{\smallskip}
    Network type			& $f_c$		& $p_c$ \\
    \noalign{\smallskip}\hline\noalign{\smallskip}
    Rectangular			& $0.5065$	& $0.4935$  \\
    Triangular 			& $0.6768$	& $0.3232$  \\
    Triangular unstructured	& $0.6562$	& $0.3438$  \\
    \noalign{\smallskip}\hline
  \end{tabular}
\end{table}

\begin{figure}[!ht]
  \centering
  \includegraphics[scale=0.2]{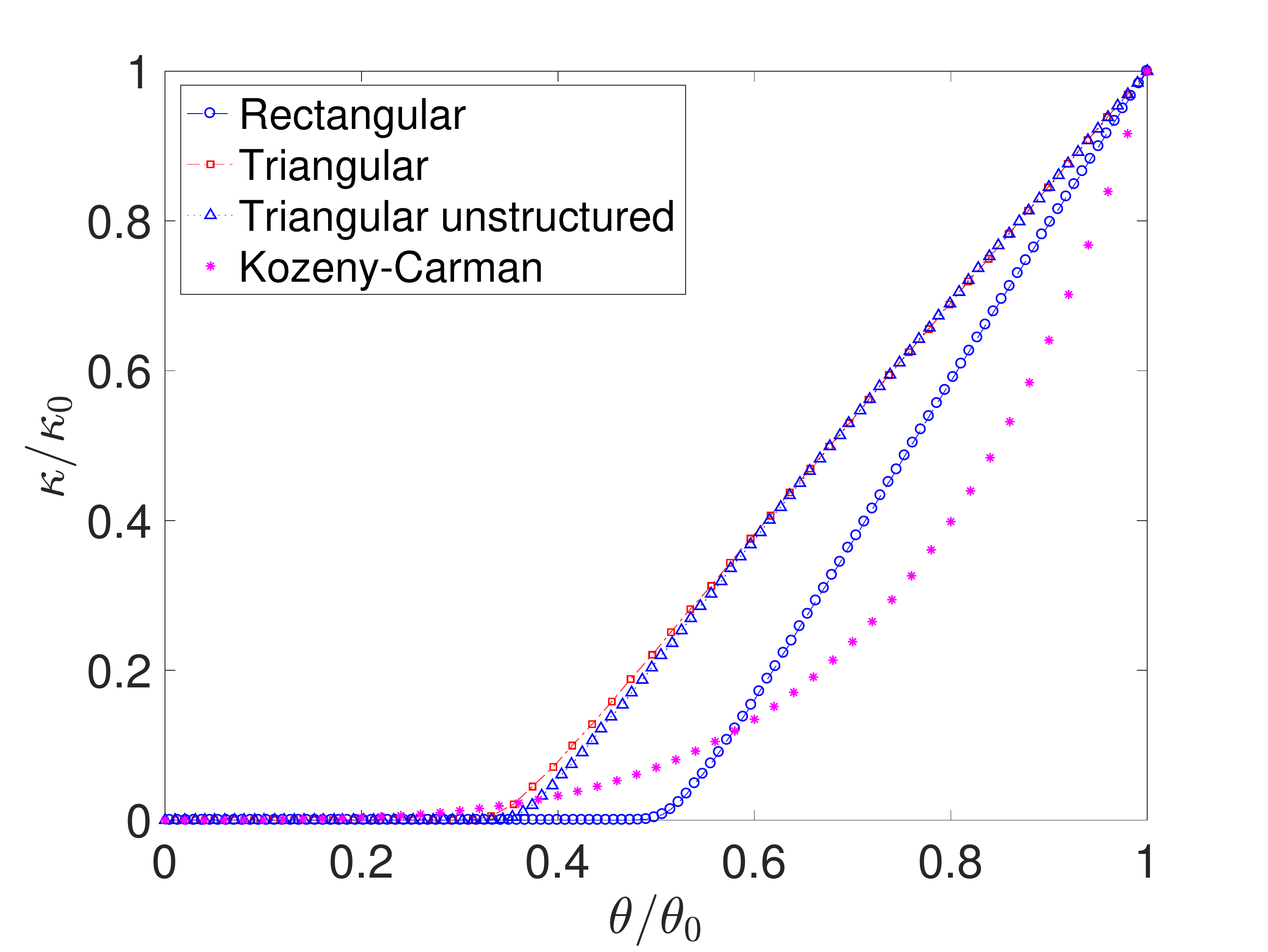}
  \caption{The network-inspired versus Kozeny-Carman based permeability-porosity relations.} \label{fig_6.40}
\end{figure}

% Problem formulation %%%%%%%%%%%%%%%%%%%%%%%%%%%%%%%%%%%%%%%%%%%%%

\section{Problem formulation} \label{Sec:Problem_formulation}

The following numerical experiments are designed to study the different relations for the porosity and the permeability. In both experiments, the boundary conditions are chosen such that a decrease in porosity is realised in some parts of the computational domain.

\subsection{Problem with high pump pressure} \label{Sec:problem_formulation_highpressure}

In this numerical experiment, the infiltration of an incompressible fluid through a filter into a two-dimensional area is considered, as shown in Fig.~\ref{Fig:rectangle1}. During the infiltration, a high pump pressure is used to inject water into the porous medium. Leading to a compression of the material against the rigid right boundary $\Gamma_4$.

\begin{figure}[!ht]
  \centering
  \begin{tikzpicture}[thick]
    \draw (0, 0) -- (0, 3) -- (6, 3) -- (6, 0) -- (0, 0);
    \draw[color = dkgreen] (5.8, 2.8) node{$\Omega$};
    \draw[color = dkgreen] (3, 3.3) node{$\Gamma_1$};
    \draw[color = dkgreen] (-0.3, 1.5) node{$\Gamma_2$};
    \draw[color = dkgreen] (3, -0.3) node{$\Gamma_3$};
    \draw[color = dkgreen] (6.3, 1.5) node{$\Gamma_4$};
    \draw[<->] (0, -0.6) -- (6, -0.6);
    \draw (3, -0.9) node{$L$};
    \draw[<->] (-0.6, 0) -- (-0.6, 3);
    \draw (-0.9, 1.5) node{$H$};
  \end{tikzpicture}
\caption{Sketch of the setup for the two-dimensional problem with high pump pressure.} \label{Fig:rectangle1}
\end{figure}
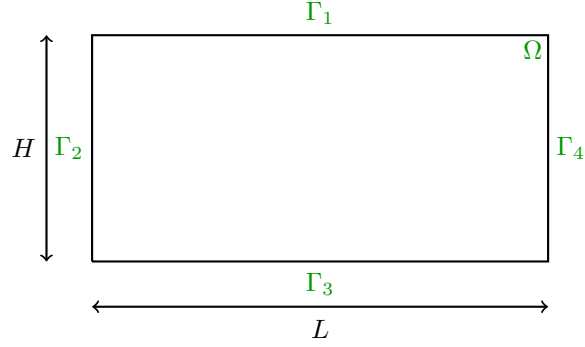

The computational domain $\Omega$ is a two-dimensional rectangular surface with Cartesian coordinates $\mathbf{x} = (x, y)$. In order to solve this problem, Biot's consolidation model, as described in Sect.~\ref{Sec:Governing_equations}, is applied on the computational domain~$\Omega$ with width $L$ and height $H$. The fluid is injected into the soil through a filter placed on boundary segment~$\Gamma_2$. More precisely, the boundary conditions for this problem are given as follows:
\begin{subequations}
  \begin{align}
    \frac{\kappa}{\eta} \nabla p \cdot \mathbf{n} = 0		&\mbox{\quad on\quad} \mathbf{x} \in \Gamma_1 \cup \Gamma_3; \\
    p = p_{pump}							&\mbox{\quad on\quad} \mathbf{x} \in \Gamma_2; \\
    p = 0									&\mbox{\quad on\quad} \mathbf{x} \in \Gamma_4; \\
    (\boldsymbol{\sigma}' \mathbf{n}) \cdot \mathbf{t} = 0	&\mbox{\quad on\quad} \mathbf{x} \in \Gamma_1 \cup \Gamma_3 \cup \Gamma_4; \\
    \mathbf{u} \cdot \mathbf{n} = 0                     			&\mbox{\quad on\quad} \mathbf{x} \in \Gamma_1 \cup \Gamma_3 \cup \Gamma_4; \label{Eq:BC1_5} \\
    \boldsymbol{\sigma}' \mathbf{n} = \mathbf{0}			&\mbox{\quad on\quad} \mathbf{x} \in \Gamma_2,
  \end{align} \label{Eq:BC1}
\end{subequations}
where $\mathbf{t}$ is the unit tangent vector at the boundary, $\mathbf{n}$ the outward unit normal vector and $p_{pump}$ is a prescribed high pump pressure due to the injection of the fluid. Figure~\ref{Fig:rectangle1} shows the definition of the boundary segments. Initially, the following condition is imposed:
\begin{equation}
  \mathbf{u}(\mathbf{x}, 0) = 0 \quad \mbox{for} \quad \mathbf{x} \in \Omega. \label{Eq:IC1}
\end{equation}

\subsection{Squeeze problem}

The infiltration of a fluid through a filter into a rectangular two-dimensional area is shown in Fig.~\ref{Fig:rectangle2}. In this numerical experiment, the porous medium is squeezed by applying a vertical load on the middle of the top and bottom edges of the domain.

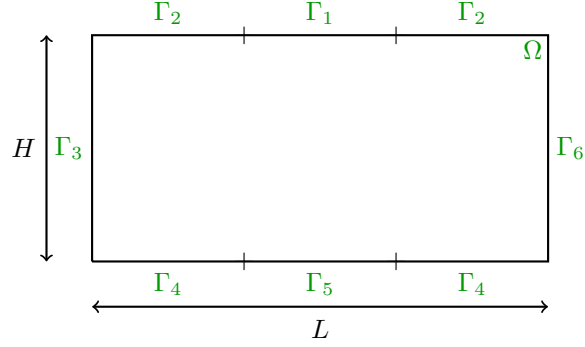
\begin{figure}[!ht]
  \centering
  \begin{tikzpicture}[thick]
    \draw (2, 0) node{+};
    \draw (2, 3) node{+};
    \draw (4, 0) node{+};
    \draw (4, 3) node{+};
    \draw (0, 0) -- (0, 3) -- (6, 3) -- (6, 0) -- (0, 0);
    \draw[color = dkgreen] (5.8, 2.8) node{$\Omega$};
    \draw[color = dkgreen] (3, 3.3) node{$\Gamma_1$};
    \draw[color = dkgreen] (1, 3.3) node{$\Gamma_2$};
    \draw[color = dkgreen] (5, 3.3) node{$\Gamma_2$};
    \draw[color = dkgreen] (-0.3, 1.5) node{$\Gamma_3$};
    \draw[color = dkgreen] (1, -0.3) node{$\Gamma_4$};
    \draw[color = dkgreen] (5, -0.3) node{$\Gamma_4$};
    \draw[color = dkgreen] (3, -0.3) node{$\Gamma_5$};
    \draw[color = dkgreen] (6.3, 1.5) node{$\Gamma_6$};
    \draw[<->] (0, -0.6) -- (6, -0.6);
    \draw (3, -0.9) node{$L$};
    \draw[<->] (-0.6, 0) -- (-0.6, 3);
    \draw (-0.9, 1.5) node{$H$};
  \end{tikzpicture}
\caption{Sketch of the setup for the two-dimensional squeeze problem.} \label{Fig:rectangle2}
\end{figure}

This problem is solved using Biot's consolidation model, that is applied on the computational domain $\Omega$ with width $L$ and height $H$. The fluid is injected into the soil through a filter placed on boundary segment~$\Gamma_3$. The boundary conditions for this problem are given as follows:
\begin{subequations}
  \begin{align}
    \frac{\kappa}{\eta} \nabla p \cdot \mathbf{n} = 0		&\mbox{\quad on\quad} \mathbf{x} \in \Gamma_1 \cup \Gamma_2 \cup \Gamma_4 \cup \Gamma_5; \\
    p = p_{pump}							&\mbox{\quad on\quad} \mathbf{x} \in \Gamma_3; \\
    p = 0									&\mbox{\quad on\quad} \mathbf{x} \in \Gamma_6; \\
    \boldsymbol{\sigma}' \mathbf{n} = (0, -\sigma'_0)^T	&\mbox{\quad on\quad} \mathbf{x} \in \Gamma_1; \label{Eq:BC2_4} \\
    \boldsymbol{\sigma}' \mathbf{n} = \mathbf{0}			&\mbox{\quad on\quad} \mathbf{x} \in \Gamma_2 \cup \Gamma_3 \cup \Gamma_4; \label{Eq:BC2_5} \\
    \boldsymbol{\sigma}' \mathbf{n} = (0, \sigma'_0)^T	&\mbox{\quad on\quad} \mathbf{x} \in \Gamma_5; \label{Eq:BC2_6} \\
    \mathbf{u} = \mathbf{0}							&\mbox{\quad on\quad} \mathbf{x} \in \Gamma_6,
  \end{align} \label{Eq:BC2}
\end{subequations}
where $\mathbf{t}$ is the unit tangent vector at the boundary, $\mathbf{n}$ the outward unit normal vector, $p_{pump}$ is a prescribed pump pressure due to the injection of the fluid and $\sigma'_0$ is the intensity of a uniform vertical load. Figure~\ref{Fig:rectangle2} shows the definition of the boundary segments. Initially, the following condition is imposed:
\begin{equation}
  \mathbf{u}(\mathbf{x}, 0) = 0 \quad \mbox{for} \quad \mathbf{x} \in \Omega. \label{Eq:IC2}
\end{equation}

% Numerical method %%%%%%%%%%%%%%%%%%%%%%%%%%%%%%%%%%%%%%%%%%%%%%

\section{Numerical method}

To present the variational formulation of problem~\eqref{Eq:Biot}, we first introduce the appropriate function spaces. Let $L^2(\Omega)$ be the Hilbert space of square integrable scalar-valued functions, with inner product $(f, g) = \int_\Omega fg\ d\Omega$. And let $H^1(\Omega)$ denote the subspace of $L^2(\Omega)$ of functions with first derivatives in~$L^2(\Omega)$. We further introduce the function spaces
\begin{align*}
  \mathscr{W}(\Omega) 	&= \{\mathbf{w} \in (H^1(\Omega))^2 : (\mathbf{w} \cdot \mathbf{n})|_{\Gamma_1 \cup \Gamma_3 \cup \Gamma_4} = 0 \}; \\
  \mathscr{Q}(\Omega) 	&= \{q \in H^1(\Omega) : q|_{\Gamma_2} = p_{pump} \mbox{\ and\ } q|_{\Gamma_4} = 0 \}.
\end{align*}
In addition, we consider the bilinear forms
\begin{align*}
  a(\mathbf{u}, \mathbf{w}) 	&= \lambda(\nabla \cdot \mathbf{u}, \nabla \cdot \mathbf{w}) + 2 \mu \sum_{i, j = 1}^2 (\epsilon_{ij}(\mathbf{u}), \epsilon_{ij}(\mathbf{w})); \\
  b(p, \mathbf{w})       	&= (p, \nabla \cdot \mathbf{w}); \\
  c(p, q)             		&= \sum_{i = 1}^2 (\frac{\kappa}{\eta} \frac{\partial p}{\partial x_i}, \frac{\partial q}{\partial x_i}).
\end{align*}
The variational formulation for problem~\eqref{Eq:Biot} with boundary and initial conditions~\eqref{Eq:BC1}-\eqref{Eq:IC1}, consists of the following, using the notation $\dot{\mathbf{u}} = \frac{\partial \mathbf{u}}{\partial t}$: Find $(\mathbf{u}(t), p(t)) \in (\mathscr{W} \times \mathscr{Q})$ such that
\begin{subequations}
  \begin{align}
    a(\mathbf{u}(t), \mathbf{w}) - b(p(t), \mathbf{w}) 		&= h(\mathbf{w}) 	&\forall \mathbf{w} \in \mathscr{W}; \\
    b(q, \dot{\mathbf{u}}(t)) + c(p(t), q) 	&= 0 	&\forall q \in \mathscr{Q}_0,
  \end{align} \label{Eq:Variationalform}
\end{subequations}
with the initial condition $\mathbf{u}(0) = \mathbf{0},$ and where
\begin{align*}
  h(\mathbf{w})  			&= -p_{pump} \int_{\Gamma_2} \mathbf{w} \cdot \mathbf{n} \ d\Gamma; \\
  \mathscr{Q}_0(\Omega) 	&= \{q \in H^1(\Omega): q|_{\Gamma_2 \cup \Gamma_4} = 0 \}.
\end{align*}
Subsequently, problem~\eqref{Eq:Variationalform} is solved by applying the finite element method. Let $\mathscr{P}_h^k \subset H^1(\Omega)$ be a function space of piecewise polynomials on $\Omega$ of degree $k$. Hence, we define finite element approximations for $\mathscr{W}$ and $\mathscr{Q}$ as $\mathscr{W}_h^k = \mathscr{W} \cap (\mathscr{P}_h^k \times \mathscr{P}_h^k)$ with basis $\{\pmb{\phi}_i = (\phi_i, \phi_i) \in (\mathscr{W}_h^k \times \mathscr{W}_h^k): i = 1, \ldots, n_u \}$ and $\mathscr{Q}_h^{k'} = \mathscr{Q} \cap \mathscr{P}_h^{k'}$ with basis $\{\psi_j \in \mathscr{Q}_h^{k'}: j = 1, \ldots, n_p \}$, respectively~\cite{AGLR08}. Afterwards, we approximate the functions $\mathbf{u}(t)$ and $p(t)$ with functions $\mathbf{u}_h(t) \in \mathscr{W}_h^k$ and $p_h(t) \in \mathscr{Q}_h^{k'}$, defined as
\begin{equation}
  \mathbf{u}_h(t) = \sum_{i = 1}^{n_u} \mathbf{u}_i(t) \pmb{\phi}_i, \quad p_h(t) = \sum_{j = 1}^{n_p} p_j(t) \psi_j,
\end{equation}
in which the Dirichlet boundary conditions are imposed. Simultaneously, discretisation in time is applied using the backward Euler method. Let $\tau$ be the time step size and define a time grid $\{t_m = m \tau: m \in \mathbb{N}\}$, then the discrete Galerkin scheme of~\eqref{Eq:Variationalform} is formulated as follows: For $m \geq 1$, find $(\mathbf{u}_h^m, p_h^m) \in (\mathscr{W}_h^k \times \mathscr{Q}_h^{k'})$ such that
\begin{align}
  a(\mathbf{u}_h^m, \mathbf{w}_h) - b(p_h^m, \mathbf{w}_h) 	&= h(\mathbf{w}_h) \quad \quad\ \forall \mathbf{w}_h \in \mathscr{W}_{h}^k; \label{Eq:Discrete_Galerkin1} \\
  b(q_h, \mathbf{u}_h^m) + \tau c(p_h^m, q_h) 		&= b(q_h, \mathbf{u}_h^{m-1}) \ \forall q_h \in \mathscr{Q}_{0h}^{k'}, \label{Eq:Discrete_Galerkin2}
\end{align}
while for $m = 0$: $\mathbf{u}_h^0 = \mathbf{0}$. The discrete Galerkin scheme for problem~\eqref{Eq:Biot} with boundary and initial conditions~\eqref{Eq:BC2}-\eqref{Eq:IC2} is derived similarly.

In the network-inspired relation~\eqref{Eq:NI}, the permeability is equal to zero for $\theta \in [0, \hat{\theta}]$. Hence for large percolation thresholds, is it more likely that the permeability goes to zero. In the next section, the convergence of problem~\eqref{Eq:Biot} to a saddle point problem when $\kappa$ goes to zero is proven.

\subsection{Convergence to a saddle point problem} \label{Sec:Proof}

In this section, we will prove that the solution of the variational formulation~\eqref{Eq:Variationalform} converges to the solution of the related saddle point problem as $\tau \kappa \rightarrow 0$, both in the continuous as in the discrete system. Since we are interested in the case $\tau \kappa \rightarrow 0$, we can assume without loss of generality that $\kappa$ is bounded and that it fulfils the following relation (see Fig.~\ref{fig_6.40}):
\begin{equation}
  \kappa(\mathbf{x}, t) = \kappa_{max} f(\theta),
\end{equation}
where $\kappa_{max} > 0$ is constant and $f(\theta) \in [0, 1]$. Consequently, define the bilinear form
\begin{equation}
  c_\theta(p, q) = \sum_{i = 1}^2 (\frac{f(\theta)}{\eta} \frac{\partial p}{\partial x_i}, \frac{\partial q}{\partial x_i}),
\end{equation}
and define $\mathbf{\widetilde{u}} = \mathbf{u}^m - \mathbf{u}^{m-1}$, which gives $\mathbf{u}^m = \mathbf{\widetilde{u}} + \mathbf{u}^{m-1}$. This transformation results in the following variational formulation: For $m \geq 1$, find $(\mathbf{\widetilde{u}}, p^m) \in (\mathscr{W} \times \mathscr{Q})$ such that
\begin{equation*}
  (\mathcal{P}_{\tau \kappa}) :=
  \begin{cases}
    a(\mathbf{\widetilde{u}}, \mathbf{w}) - b(p^m, \mathbf{w}) = g(\mathbf{u}, \mathbf{w})	& \forall \mathbf{w} \in \mathscr{W}; \\
    b(q, \mathbf{\widetilde{u}}) + \tau \kappa_{max} c_\theta(p^m, q) = 0		& \forall q \in \mathscr{Q}_0,
  \end{cases}
\end{equation*}
where $g(\mathbf{u}, \mathbf{w}) = h(\mathbf{w}) - a(\mathbf{u}^{m-1}, \mathbf{w})$. The well-posedness of this problem has been analysed in~\cite{SHOW00,ZENI84}.

Using the inequality $\abs{2ab} \leq (a^2 + b^2)$ for $a,b \in \mathbb{R}$, we can show that
\begin{align}
  \abs{c_\theta(p^m,q)} &\leq \frac{1}{2\eta} \left[\sum_{i=1}^2 (\frac{\partial p^m}{\partial x_i},\frac{\partial p^m}{\partial x_i})+\sum_{i = 1}^2 (\frac{\partial q}{\partial x_i},\frac{\partial q}{\partial x_i})\right] \nonumber\\
			&\leq \frac{1}{2\eta} (\Vert p^m \Vert^2_{H^1(\Omega)} + \Vert q \Vert^2_{H^1(\Omega)}) < \infty, \label{Eq:c_bounded}
\end{align}
where the second inequality is a result of the definition of the $H^1$-norm
\begin{equation}
  \Vert p \Vert^2_{H^1(\Omega)} = (p, p) + \sum_{i = 1}^2 (\frac{\partial p}{\partial x_i}, \frac{\partial p}{\partial x_i}).
\end{equation}
Furthermore, define the spaces $\mathscr{R}(\Omega) = \{q \in L^2(\Omega) : q|_{\Gamma_2} = p_{pump} \mbox{\ and\ } q|_{\Gamma_4} = 0 \}$ and $\mathscr{R}_0(\Omega) = \{q \in L^2(\Omega) : q|_{\Gamma_2 \cup \Gamma_4} = 0 \}$. Hence, assuming that $\tau \kappa_{max} \rightarrow 0$, the variational formulation becomes: For $m \geq 1$, find $(\mathbf{\widetilde{u}}_0, p_0^m) \in (\mathscr{W} \times \mathscr{R})$ such that
\begin{equation*}
  (\mathcal{P}_0) :=
  \begin{cases}
    a(\mathbf{\widetilde{u}}_0, \mathbf{w}) - b(p_0^m, \mathbf{w}) = g(\mathbf{u}, \mathbf{w})	& \forall \mathbf{w} \in \mathscr{W}; \\
    b(q, \mathbf{\widetilde{u}}_0) = 0						& \forall q \in \mathscr{R}_0.
  \end{cases}
\end{equation*}
Brenner and Scott~\cite{BRSC07} showed, using Korn's inequality, that the bilinear form $a(\mathbf{u}, \mathbf{w})$ is coercive in $\mathscr{W}$. In addition, the bilinear form $b(p, \mathbf{w})$ satisfies the inf-sup condition on $(\mathscr{W} \times \mathscr{R})$ as proven in~\cite{BRAE07}. The continuity of both bilinear forms follows from the Cauchy-Schwarz inequality. Hence, from Brezzi's splitting theorem~\cite{BRAE07} we can conclude that a unique solution exists for problem $(\mathcal{P}_{0})$. This problem is referred to as a saddle point problem. We assume that our domain allows classical solutions to exist for problem $(\mathcal{P}_0)$, and as a result the solution of this problem is in the spaces $(\mathscr{W} \times \mathscr{Q})$. Then we prove that solutions to $(\mathcal{P}_{\tau \kappa})$ converge to solutions of $(\mathcal{P}_0)$, which illustrates that the solutions of $(\mathcal{P}_{\tau \kappa})$ inherit the saddle point structure of $(\mathcal{P}_0)$.

\begin{theorem}
  The solution of $(\mathcal{P}_{\tau \kappa})$ converges to the solution of $(\mathcal{P}_0)$ for $\tau \kappa_{max} \rightarrow 0$.
\end{theorem}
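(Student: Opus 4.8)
The plan is to estimate the difference between the solutions of the two problems directly, exploiting the fact that the only difference between $(\mathcal{P}_{\tau\kappa})$ and $(\mathcal{P}_0)$ is the term $\tau\kappa_{max}c_\theta(p^m,q)$, whose bilinear form we have already shown to be bounded in \eqref{Eq:c_bounded}. First I would set $\mathbf{e}_u = \mathbf{\widetilde{u}} - \mathbf{\widetilde{u}}_0$ and $e_p = p^m - p_0^m$. Subtracting the two systems, the right-hand sides $g(\mathbf{u},\mathbf{w})$ cancel, leaving
\begin{align*}
  a(\mathbf{e}_u, \mathbf{w}) - b(e_p, \mathbf{w}) &= 0 &&\forall \mathbf{w} \in \mathscr{W}; \\
  b(q, \mathbf{e}_u) &= -\tau\kappa_{max}\, c_\theta(p^m, q) &&\forall q \in \mathscr{R}_0.
\end{align*}

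Next I would test the first equation with $\mathbf{w} = \mathbf{e}_u \in \mathscr{W}$ and the second with $q = e_p$ (noting $e_p \in \mathscr{R}_0$ since both pressures share the same Dirichlet data on $\Gamma_2 \cup \Gamma_4$), then add. The cross terms $b(e_p,\mathbf{e}_u)$ cancel, giving $a(\mathbf{e}_u,\mathbf{e}_u) = -\tau\kappa_{max}\,c_\theta(p^m,e_p)$. Coercivity of $a$ on $\mathscr{W}$ (Korn's inequality, as cited from Brenner and Scott) gives $\alpha\Vert\mathbf{e}_u\Vert^2_{H^1} \leq a(\mathbf{e}_u,\mathbf{e}_u)$, and the bound \eqref{Eq:c_bounded} controls the right-hand side by $\tfrac{\tau\kappa_{max}}{2\eta}(\Vert p^m\Vert^2_{H^1} + \Vert e_p\Vert^2_{H^1})$. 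To close this I need an a priori bound on $\Vert p^m\Vert_{H^1}$ and $\Vert p^m\Vert_{H^1}$ that is uniform in $\tau\kappa_{max}$; such a bound follows from the well-posedness of $(\mathcal{P}_{\tau\kappa})$ cited from \cite{SHOW00,ZENI84}, since the data $g(\mathbf{u},\mathbf{w})$ does not depend on $\tau\kappa_{max}$. This yields $\Vert\mathbf{e}_u\Vert_{H^1} \to 0$ as $\tau\kappa_{max}\to 0$.

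Then I would recover convergence of the pressure from the inf-sup (Brezzi) condition: since $b(e_p,\mathbf{w}) = a(\mathbf{e}_u,\mathbf{w})$ for all $\mathbf{w}\in\mathscr{W}$, the inf-sup constant $\beta$ gives $\beta\Vert e_p\Vert \leq \sup_{\mathbf{w}} b(e_p,\mathbf{w})/\Vert\mathbf{w}\Vert_{H^1} = \sup_{\mathbf{w}} a(\mathbf{e}_u,\mathbf{w})/\Vert\mathbf{w}\Vert_{H^1} \leq C\Vert\mathbf{e}_u\Vert_{H^1}$ by continuity of $a$. Hence $\Vert p^m - p_0^m\Vert \to 0$ as well. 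Note the inf-sup condition here is the one on $(\mathscr{W}\times\mathscr{R})$ used to establish well-posedness of $(\mathcal{P}_0)$, so the pressure convergence is naturally in the $L^2$-norm. The same argument applies verbatim in the discrete setting, provided the discrete spaces $\mathscr{W}_h^k, \mathscr{Q}_h^{k'}$ are chosen so that the discrete analogues of coercivity and inf-sup hold uniformly in $h$ (this is precisely why Taylor--Hood elements are used); one replaces $\mathscr{W},\mathscr{R}_0$ by their finite-dimensional subspaces throughout and the cancellation of cross terms is identical.

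The main obstacle I anticipate is justifying the uniform a priori bound on $\Vert p^m\Vert_{H^1(\Omega)}$ independent of $\tau\kappa_{max}$ — the appearance of $\Vert p^m\Vert_{H^1}$ rather than merely $\Vert p^m\Vert_{L^2}$ on the right-hand side of \eqref{Eq:c_bounded} is slightly delicate, because the limiting problem $(\mathcal{P}_0)$ only controls $p_0^m$ in $L^2$. This is exactly where the hypothesis that the domain admits classical solutions enters: it guarantees $p^m \in H^1(\Omega)$ with a bound inherited from the regularity of $g(\mathbf{u},\cdot)$, uniformly as $\tau\kappa_{max}\to 0$. A secondary point worth stating carefully is that $e_p$ is a legitimate test function in $\mathscr{R}_0$ (equivalently in $\mathscr{Q}_0$, since its restriction to $\Gamma_2\cup\Gamma_4$ vanishes), which is what makes the energy argument go through cleanly.
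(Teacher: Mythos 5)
Your proposal is correct and follows essentially the same route as the paper's proof: subtract the two systems, test with the errors so that the cross terms $b(e_p,\mathbf{e}_u)$ cancel, use coercivity of $a$ together with the bound \eqref{Eq:c_bounded} to force $\mathbf{\widetilde{u}}-\mathbf{\widetilde{u}}_0\rightarrow 0$, and then transfer this to the pressure via the first equation. You are in fact slightly more careful than the paper in two places: you make explicit that one needs a bound on $\Vert p^m \Vert_{H^1(\Omega)}$ (and $\Vert p_0^m \Vert_{H^1(\Omega)}$) uniform in $\tau\kappa_{max}$ before the right-hand side can be sent to zero, whereas the paper only records finiteness for a fixed $p^m$; and you invoke the inf-sup condition to pass from $b(p^m-p_0^m,\mathbf{w})\rightarrow 0$ for all $\mathbf{w}$ to $p^m\rightarrow p_0^m$, a step the paper leaves implicit.
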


\begin{proof}
Subtracting $(\mathcal{P}_{0})$ from $(\mathcal{P}_{\tau \kappa})$ yields
\begin{subequations}
  \begin{align}
    a(\mathbf{\widetilde{u}} - \mathbf{\widetilde{u}}_0, \mathbf{w}) - b(p^m - p_0^m, \mathbf{w}) 		&= 0	&\forall \mathbf{w} \in \mathscr{W}; \label{Eq:Diff1} \\
    b(q, \mathbf{\widetilde{u}} - \mathbf{\widetilde{u}}_0) + \tau \kappa_{max} c_\theta(p^m, q) 	&= 0 	&\forall q \in \mathscr{Q}_0. \label{Eq:Diff2}
  \end{align}
\end{subequations}
Take $\mathbf{w} = \mathbf{\widetilde{u}} - \mathbf{\widetilde{u}}_0$ in~\eqref{Eq:Diff1}, hence we get
\begin{equation}
  a(\mathbf{\widetilde{u}} - \mathbf{\widetilde{u}}_0, \mathbf{\widetilde{u}} - \mathbf{\widetilde{u}}_0) - b(p^m - p_0^m, \mathbf{\widetilde{u}} - \mathbf{\widetilde{u}}_0) = 0. \label{Eq:a_b}
\end{equation}
Using~\eqref{Eq:Diff2} and the bilinearity of the form $c_\theta(p, q)$ gives
\begin{equation*}
  a(\mathbf{\widetilde{u}} - \mathbf{\widetilde{u}}_0, \mathbf{\widetilde{u}} - \mathbf{\widetilde{u}}_0) + \tau \kappa_{max} c_\theta(p^m, p^m) = \tau \kappa_{max} c_\theta(p^m, p_0^m).
\end{equation*}
From the coercivity of $a(\mathbf{u}, \mathbf{w})$ and the definition of $c_\theta(p, q)$, we can state that
\begin{subequations}
  \begin{align}
    0 &\leq a(\mathbf{\widetilde{u}} - \mathbf{\widetilde{u}}_0, \mathbf{\widetilde{u}} - \mathbf{\widetilde{u}}_0) + \tau \kappa_{max} c_\theta(p^m, p^m) \\
      &= \tau \kappa_{max} c_\theta(p^m, p_0^m) < \infty,
  \end{align} \label{Eq:a_c}
\end{subequations}
where the last inequality follows from~\eqref{Eq:c_bounded}. Let $\tau \kappa_{max} \rightarrow 0$, then follows that $a(\mathbf{\widetilde{u}} - \mathbf{\widetilde{u}}_0, \mathbf{\widetilde{u}} - \mathbf{\widetilde{u}}_0) \rightarrow 0$. Consequently, the coercivity of $a(\mathbf{u}, \mathbf{w})$ implies that $\mathbf{\widetilde{u}} \rightarrow \mathbf{\widetilde{u}}_0$ as $\tau \kappa_{max} \rightarrow 0$. Therefore, we have $a(\mathbf{\widetilde{u}} - \mathbf{\widetilde{u}}_0, \mathbf{w}) \rightarrow 0$ for all $\mathbf{w} \in \mathscr{W}$. Subsequently, this last result together with Eq.~\eqref{Eq:Diff1} lead to $b(p^m - p_0^m, \mathbf{w}) \rightarrow 0$. Hence, we can conclude that $p^m \rightarrow p_0^m$ as $\tau \kappa_{max} \rightarrow 0$.
\end{proof}

Numerically, the discrete Galerkin scheme~\eqref{Eq:Discrete_Galerkin1}-\eqref{Eq:Discrete_Galerkin2} can be expressed as a linear equations system
\begin{equation*}
  (\mathcal{P}^D_{\tau \kappa}) :=
  \begin{cases}
    A \mathbf{u}^m - B^T \mathbf{p}^m			&= \mathbf{h}, \\
    B \mathbf{u}^m + \tau \kappa_{max} C \mathbf{p}^m	&= B \mathbf{u}^{m-1},
  \end{cases} \label{Eq:Numerical_tau}
\end{equation*}
with $\mathbf{u} = (\mathbf{u}_1, \mathbf{u}_2, \ldots , \mathbf{u}_{n_u})$ and $\mathbf{p} = (p_1, p_2, \ldots , p_{n_p})$. The matrix $A \in \mathbb{R}^{2n_u \times 2n_u}$ is the symmetric positive definite elasticity matrix, $B \in \mathbb{R}^{n_p \times 2n_u}$ the divergence matrix and $C \in \mathbb{R}^{n_p \times n_p}$ is the diffusive matrix. The vector $\mathbf{h}$ is the right-hand side vector with components $\mathbf{h}_i = h(\pmb{\phi}_i), i = 1, \ldots, n_u$. Initially, $\mathbf{u}^0 = \mathbf{0}$. For $\tau \kappa_{max} \rightarrow 0$, we have
\begin{equation*}
  (\mathcal{P}^D_0) :=
  \begin{cases}
    A \mathbf{u}_0^m - B^T \mathbf{p}_0^m	&= \mathbf{h}, \\
    B \mathbf{u}_0^m			&= B \mathbf{u}_0^{m-1}.
  \end{cases}
\end{equation*}

\begin{theorem}
  The solution of $(\mathcal{P}^D_{\tau \kappa})$ converges to the solution of $(\mathcal{P}^D_0)$ as $\tau \kappa_{max} \rightarrow 0$.
\end{theorem}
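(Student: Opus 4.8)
The plan is to carry the argument of the preceding (continuous) theorem over to the matrix setting, using the three structural facts that $A$ is symmetric positive definite, that $C$ is symmetric positive semidefinite (being the diffusive matrix associated with $c_\theta$, whose coefficient $f(\theta)/\eta$ is nonnegative, so that $\mathbf{q}^{T}C\mathbf{q}=\int_\Omega \frac{f(\theta)}{\eta}\abs{\nabla q_h}^2\,d\Omega\ge 0$), and that $B$ has full row rank, which is precisely the discrete inf--sup (LBB) condition enjoyed by the stable Taylor--Hood pair $\mathscr{W}_h^k\times\mathscr{Q}_h^{k'}$. Exactly as in the continuous proof, the step-$(m-1)$ iterate is treated as data common to both problems, so that $B\mathbf{u}^{m-1}=B\mathbf{u}_0^{m-1}$; this is an identity for $m=1$ and propagates by induction on $m$ (for $m>1$ one merely carries an additional right-hand-side term $B(\mathbf{u}^{m-1}-\mathbf{u}_0^{m-1})$, which tends to $\mathbf{0}$ by the inductive hypothesis), or, equivalently, one works with the increment $\mathbf{\widetilde{u}}$ as above.

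First I would subtract $(\mathcal{P}^D_0)$ from $(\mathcal{P}^D_{\tau\kappa})$ to obtain the error system for $\mathbf{e}_u=\mathbf{u}^m-\mathbf{u}_0^m$ and $\mathbf{e}_p=\mathbf{p}^m-\mathbf{p}_0^m$,
\[
  A\mathbf{e}_u - B^{T}\mathbf{e}_p = \mathbf{0}, \qquad B\mathbf{e}_u + \tau\kappa_{max} C\mathbf{p}^m = \mathbf{0}.
\]
Left-multiplying the first equation by $\mathbf{e}_u^{T}$ and the second by $\mathbf{e}_p^{T}$, and using $\mathbf{e}_p^{T}B\mathbf{e}_u=(B\mathbf{e}_u)^{T}\mathbf{e}_p=\mathbf{e}_u^{T}B^{T}\mathbf{e}_p$, eliminates the coupling term and leaves the discrete counterpart of~\eqref{Eq:a_c},
\[
  \mathbf{e}_u^{T}A\mathbf{e}_u + \tau\kappa_{max}\,(\mathbf{p}^m)^{T}C\mathbf{p}^m = \tau\kappa_{max}\,(\mathbf{p}_0^m)^{T}C\mathbf{p}^m.
\]

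Next I would bound the right-hand side. Since $C$ is symmetric positive semidefinite, $2(\mathbf{p}_0^m)^{T}C\mathbf{p}^m\le(\mathbf{p}^m)^{T}C\mathbf{p}^m+(\mathbf{p}_0^m)^{T}C\mathbf{p}_0^m$; absorbing the term $(\mathbf{p}^m)^{T}C\mathbf{p}^m$ on the left then gives
\[
  0\le \mathbf{e}_u^{T}A\mathbf{e}_u \le \mathbf{e}_u^{T}A\mathbf{e}_u + \frac{\tau\kappa_{max}}{2}(\mathbf{p}^m)^{T}C\mathbf{p}^m \le \frac{\tau\kappa_{max}}{2}(\mathbf{p}_0^m)^{T}C\mathbf{p}_0^m.
\]
The quantity $(\mathbf{p}_0^m)^{T}C\mathbf{p}_0^m$ is a fixed finite number, independent of $\tau\kappa_{max}$ (the discrete analogue of the finiteness asserted in~\eqref{Eq:c_bounded}, now immediate because $\mathbf{p}_0^m$ is fixed), so letting $\tau\kappa_{max}\to 0$ forces $\mathbf{e}_u^{T}A\mathbf{e}_u\to 0$, and the positive definiteness of $A$ yields $\mathbf{e}_u\to\mathbf{0}$, i.e.\ $\mathbf{u}^m\to\mathbf{u}_0^m$.

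Finally, the first error equation gives $B^{T}\mathbf{e}_p=A\mathbf{e}_u\to\mathbf{0}$; because $B$ has full row rank, $B^{T}$ is injective and bounded below on the relevant (pressure test) space, so $\mathbf{e}_p\to\mathbf{0}$, that is $\mathbf{p}^m\to\mathbf{p}_0^m$, which completes the proof. I expect that no individual estimate here is the real difficulty --- they are all elementary linear algebra --- but rather that the care lies in pinning down the two structural ingredients the argument quietly depends on: that the previous time level may legitimately be regarded as common data (so that the induction on $m$ closes), and that $B$ has full row rank uniformly in the discretisation, which is exactly why a stable Taylor--Hood discretisation (with the stabilisation of~\cite{AGLR08,RGHZ16}) is required and which links this result back to the spurious pressure oscillations discussed in the introduction.
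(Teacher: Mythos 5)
Your proof is correct, and it follows the route the paper intends: the paper's own proof of this theorem is a one-line reduction ("the argument is similar to the continuous problem; coercivity of $a(\cdot,\cdot)$ is inherited because $\mathscr{W}_h^k\subset\mathscr{W}$"), and you have simply carried that reduction out explicitly at the matrix level, with $\mathbf{e}_u^{T}A\mathbf{e}_u$ playing the role of the coercive term. In doing so you actually tighten two points that the paper leaves implicit. First, your Young-inequality absorption of $(\mathbf{p}^m)^{T}C\mathbf{p}^m$ gives a bound by $\frac{\tau\kappa_{max}}{2}(\mathbf{p}_0^m)^{T}C\mathbf{p}_0^m$, which is manifestly uniform in $\tau\kappa_{max}$; the paper's continuous argument instead invokes $\tau\kappa_{max}c_\theta(p^m,p_0^m)<\infty$ via~\eqref{Eq:c_bounded}, where $p^m$ itself depends on $\tau\kappa_{max}$, so your version closes a small logical gap. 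Second, you make explicit that recovering $\mathbf{e}_p\to\mathbf{0}$ from $B^{T}\mathbf{e}_p\to\mathbf{0}$ requires $B$ to have full row rank (the discrete inf-sup condition of the Taylor--Hood pair), and that the previous time level must be treated as common data, closed by induction on $m$; neither point appears in the paper's discrete proof, though both are needed and both hold here. None of this changes the substance --- it is the same proof, written out rather than cited.
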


\begin{proof}
The convergence proof in the discrete problem is similar to the continuous problem. Hence, it is sufficient to show that $a(\mathbf{u}, \mathbf{w})$ is coercive in $\mathscr{W}_h^k$. Since $\mathscr{W}_h^k \subset \mathscr{W}$, the coercivity property is automatically verified in the discrete case.
\end{proof}

Since Biot's poroelasticity problem converges to the related saddle point problem as $\tau \kappa \rightarrow 0$, the Taylor-Hood elements can be used to solve this problem numerically. These elements represent finite element pairs that satisfy the inf-sup condition for the saddle point problem~\cite{BBF13,ERGU13}. Although the inf-sup condition and the coercivity and boundedness of bilinear form $a(\mathbf{u}, \mathbf{w})$ warrant existence and uniqueness of the finite element solution, the inf-sup condition is not sufficient for reliable numerical solutions of Biot's problem~\eqref{Eq:Biot}. Since for low permeabilities and/or small time steps, the approximation to the pressure exhibits nonphysical oscillations due to loss of the M-matrix property, as shown in~\cite{AGLR08}. In order to improve the monotonicity properties of the finite element scheme and to obtain oscillation free approximations of the pressure, the stabilisation procedure outlined in~\cite{AGLR08,RGHZ16} is applied in this study. Therefore, a stabilisation term is added to the continuity equation in Biot's model with stabilisation parameter $\beta = \frac{\sqrt{\Delta x^2 + \Delta y^2}}{4(\lambda+2\mu)}$.

% Numerical results %%%%%%%%%%%%%%%%%%%%%%%%%%%%%%%%%%%%%%%%%%%%%%%

\section{Numerical results}

The Galerkin finite element method, with triangular Taylor-Hood elements~\cite{RAVE18,SEGA12}, is adopted to solve the discretised quasi-two-dimensional problem~\eqref{Eq:Biot}. The displacements are spatially approximated by quadratic basis functions, whereas a continuous piecewise linear approximation is used for the pressure field. From the system of equations~\eqref{Eq:Biot} and the boundary conditions~\eqref{Eq:BC1} it is obvious that this two-dimensional problem is symmetrical in the $y$-direction and that it can be reduced to a one-dimensional problem. Aguilar et al.~\cite{AGLR08} solved this one-dimensional problem analytically and showed that the finite element method with Taylor-Hood elements gives accurate numerical results. For the time integration, the backward Euler method is applied. The numerical investigations are carried out using the matrix-based software package MATLAB (version R2015a).

The computational domain is a rectangular surface with width $L = 2.0\, \mathrm{m}$ and height $H = 1.0\, \mathrm{m}$. The domain is discretised using a regular triangular grid, with $\Delta x = \Delta y = 0.02$. In addition, values for some model parameters have been chosen based on literature (see Table~\ref{Tab:MaterialValues}).

\begin{table}[H]
  \centering
  \caption{An overview of the values of the model parameters.} \label{Tab:MaterialValues}
  \begin{tabular}{llll}
    \hline\noalign{\smallskip}
    Property			& Symbol		& Value                 					& Unit \\
    \noalign{\smallskip}\hline\noalign{\smallskip}
    Young's modulus		& $E$        	& $35 \cdot 10^6$              			& $\mathrm{Pa}$ \\
    Poisson's ratio		& $\nu$      	& $0.3$                 					& - \\
    Fluid viscosity		& $\eta$		& $1.307 \cdot 10^{-3}$				& $\mathrm{Pa \cdot s}$ \\
    Initial porosity		& $\theta_0$ 	& $0.4$                 					& - \\
    Mean grain size		& $d_s$      	& $0.2 \cdot 10^{-3}$   				& $\mathrm{m}$ \\
    Pump pressure		& $p_{pump}$	& $50 \cdot 10^{5}$ / $5 \cdot 10^{5}$	& $\mathrm{Pa}$ \\
    Uniform load			& $\sigma'_0$	& $3 \cdot 10^{6}$					& $\mathrm{N/m^2}$ \\
    \noalign{\smallskip}\hline
  \end{tabular}
\end{table}

Furthermore, the Lam\'e coefficients $\lambda$ and $\mu$ are related to Young's modulus $E$ and Poisson's ratio $\nu$ by
\begin{equation}
  \lambda = \frac{\nu E}{(1+\nu)(1-2\nu)},\quad \mu = \frac{E}{2(1+\nu)}. \label{Eq:Lame}
\end{equation}
The impact of the permeability-porosity relation on the water flow is defined in this study as the impact on the time average of the volumetric flow rate $\overline{Q}_{out}$ at a distance~$L$ from the injection filter. The initial permeability $\kappa_0$ used in Eq.~\eqref{Eq:NI} is computed by the Kozeny-Carman relation~\eqref{Eq:KC}, in order to have a reliable comparison between the two relations. In the generations of the simulation results, the time step size is chosen to be $\tau = 0.5$.

\subsection{Numerical results for the problem with high pump pressure}

In order to obtain some insight into the impact of a high pump pressure on the water flow, we present an overview of the simulation results in Figs.~\ref{Fig:Results_p_KC}-\ref{Fig:Results_p_Q}. In these simulations, water is injected into the soil at a constant pump pressure of $50\, \mathrm{bar}$. The simulated fluid velocity, permeability and porosity profiles that have been obtained using the Kozeny-Carman relation are provided in Fig.~\ref{Fig:Results_p_KC}, while the simulated results that have been obtained using the network-inspired relation with $p_c = 0.3232$, corresponding with a triangular structured network, are provided in Fig.~\ref{Fig:Results_p_T}. In Fig.~\ref{Fig:Results_p_Q}, the simulated results that have been obtained using the network-inspired relation with $p_c = 0.4935$, corresponding with a rectangular network, are depicted.

\begin{figure*}[!ht]
  \centering
  \subfloat[Numerical solution for the fluid velocity.]{\includegraphics[width=0.3\textwidth]{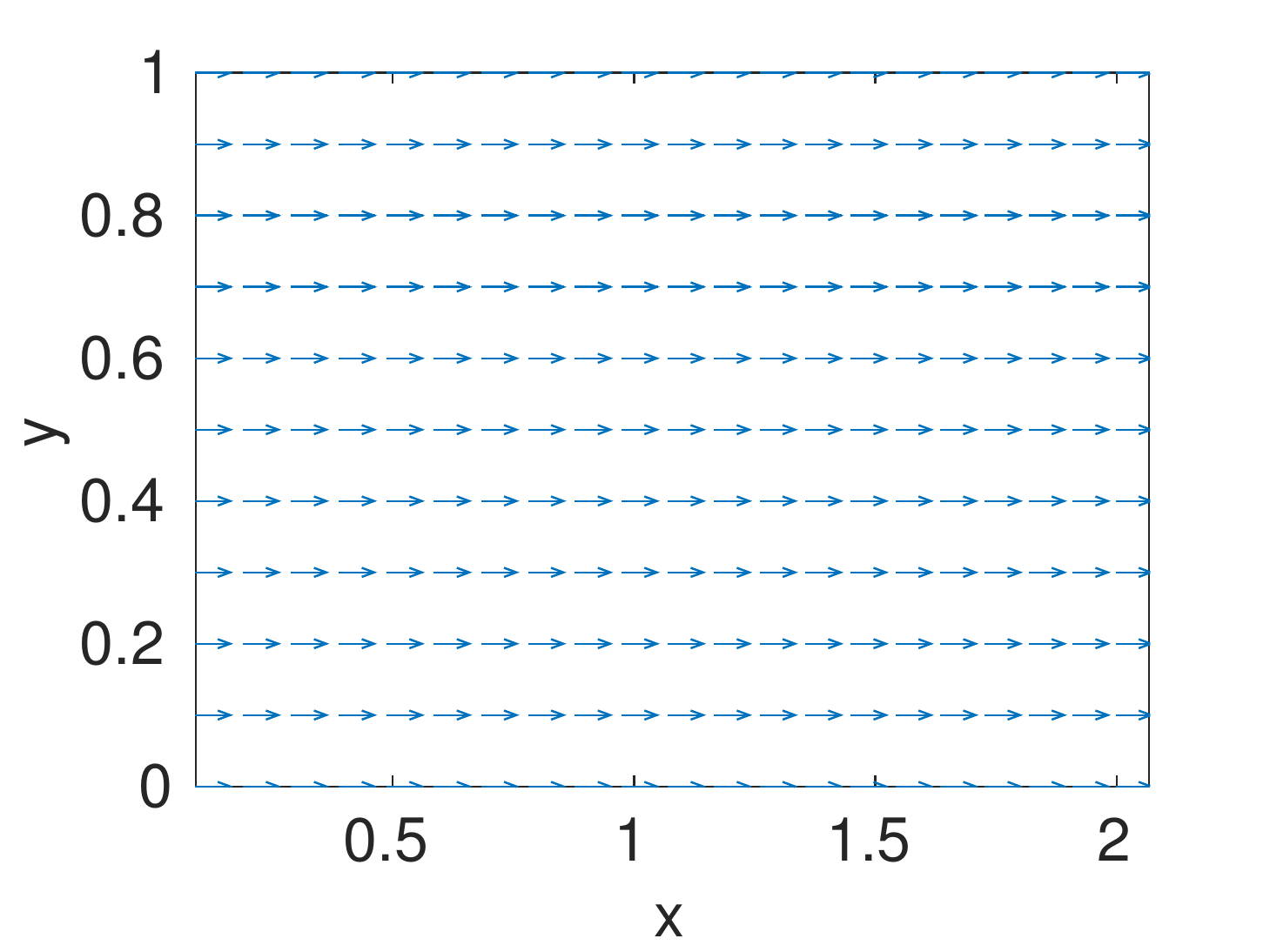}}%
  \qquad
  \subfloat[Numerical solution for the permeability. \label{Fig:Results_p_KC_kappa}]{\includegraphics[width=0.3\textwidth]{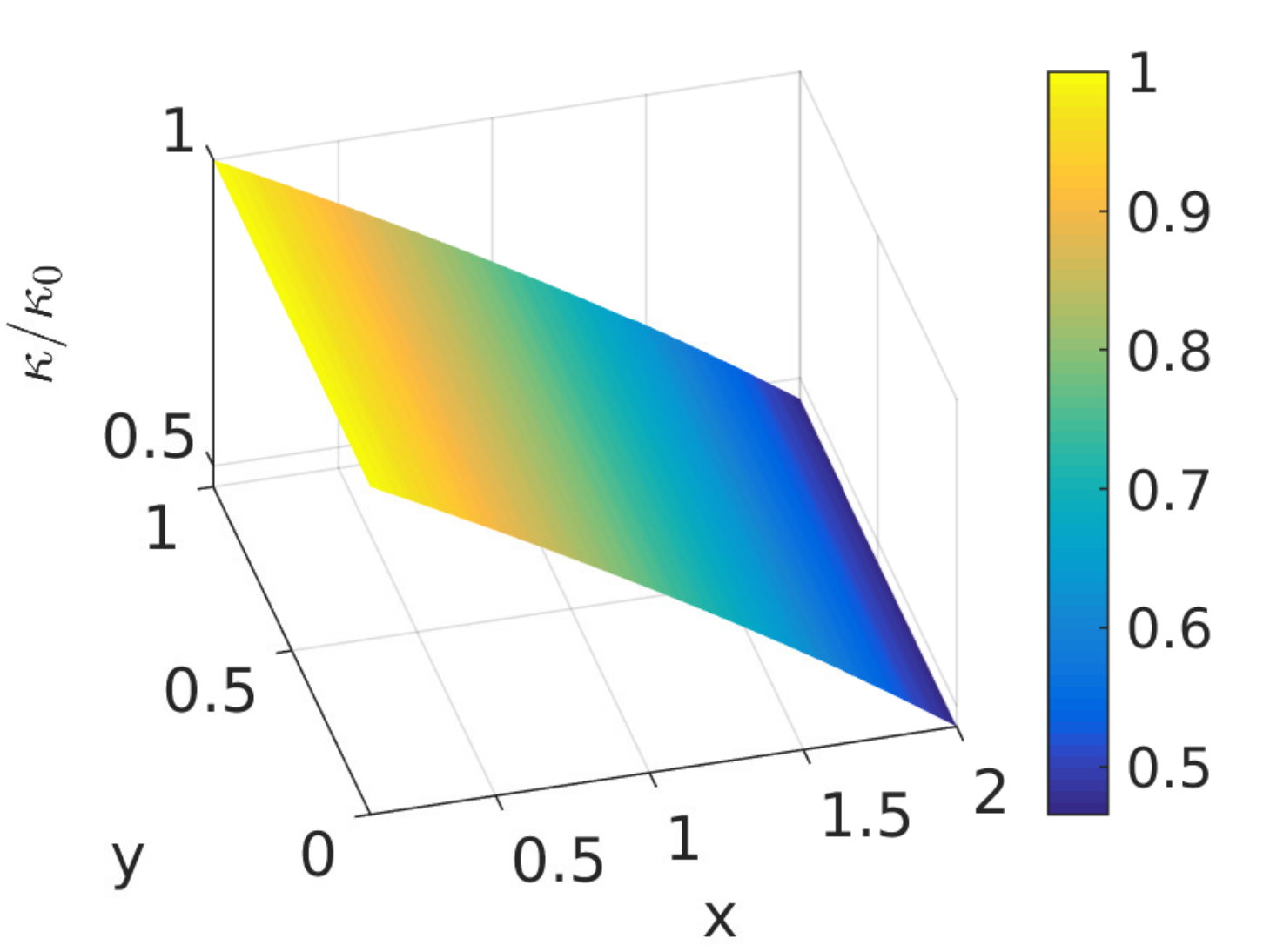}}%
  \qquad
  \subfloat[Numerical solution for the porosity. \label{Fig:Results_p_KC_theta}]{\includegraphics[width=0.3\textwidth]{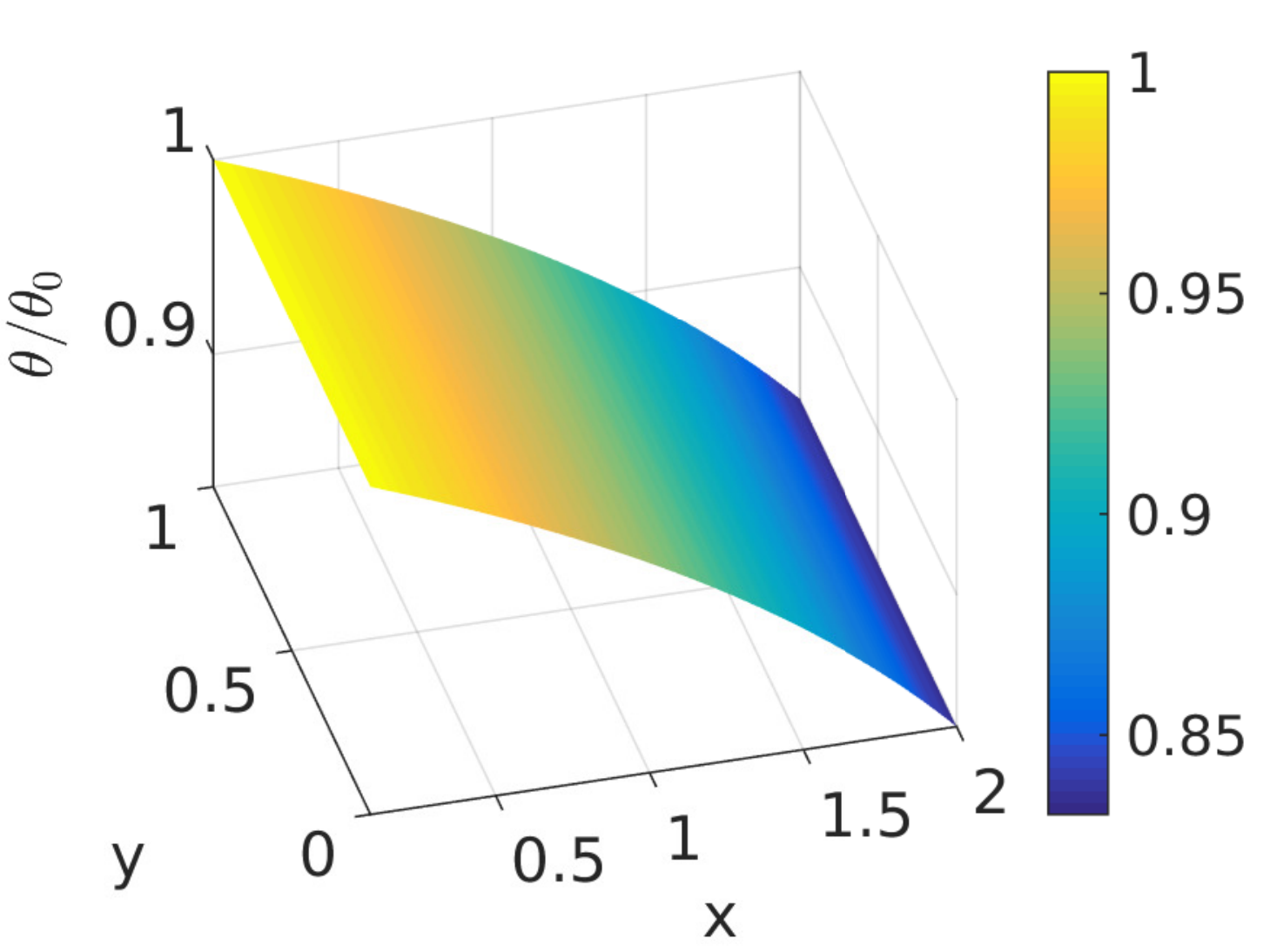}}
  
  \caption{Numerical solutions for the fluid velocity, the permeability and the porosity, at time $t = 300$, obtained using the Kozeny-Carman relation.} \label{Fig:Results_p_KC}
\end{figure*}

\begin{figure*}[!ht]
  \centering
  \subfloat[Numerical solution for the fluid velocity.]{\includegraphics[width=0.3\textwidth]{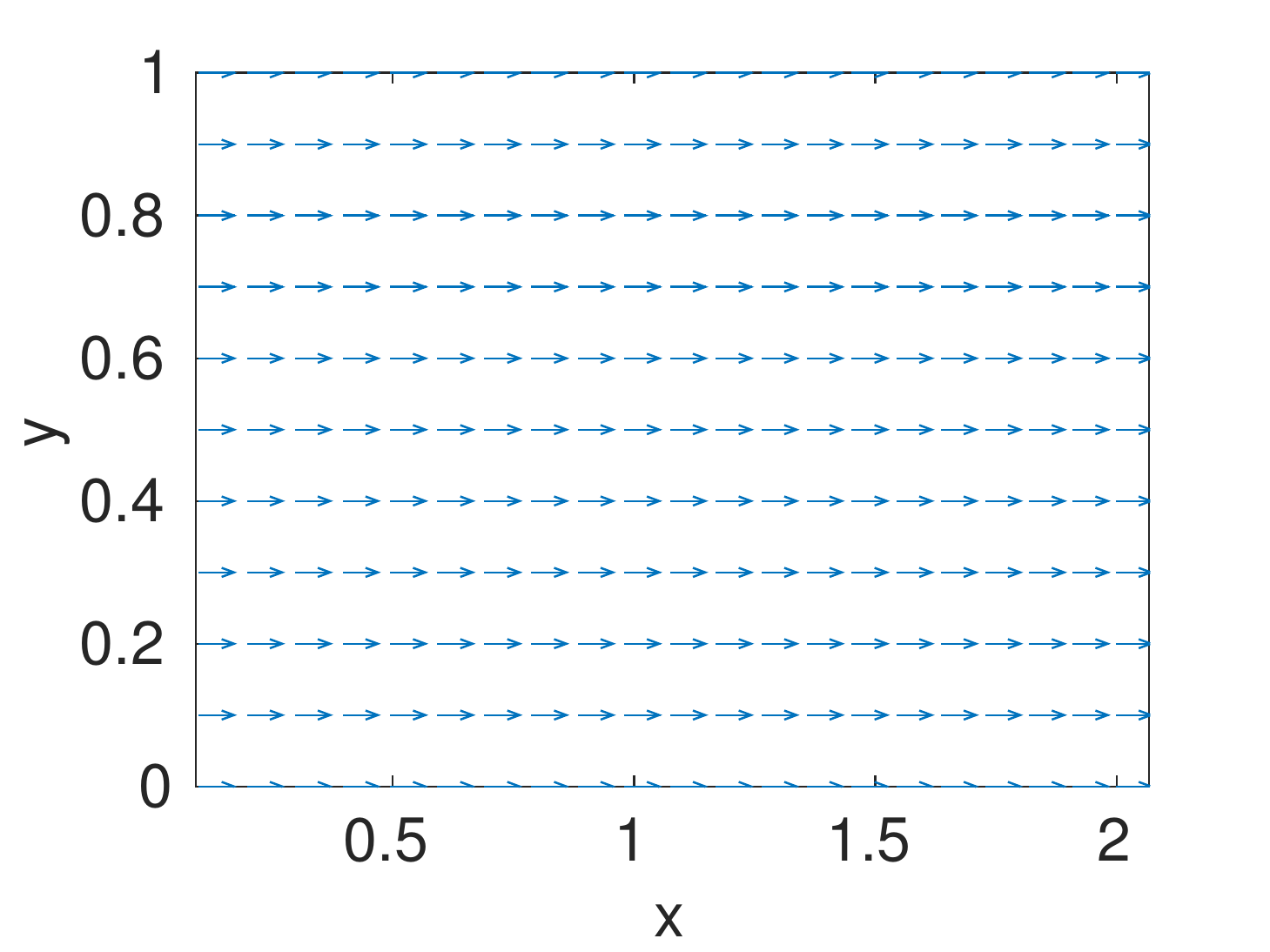}}%
  \qquad
  \subfloat[Numerical solution for the permeability. \label{Fig:Results_p_T_kappa}]{\includegraphics[width=0.3\textwidth]{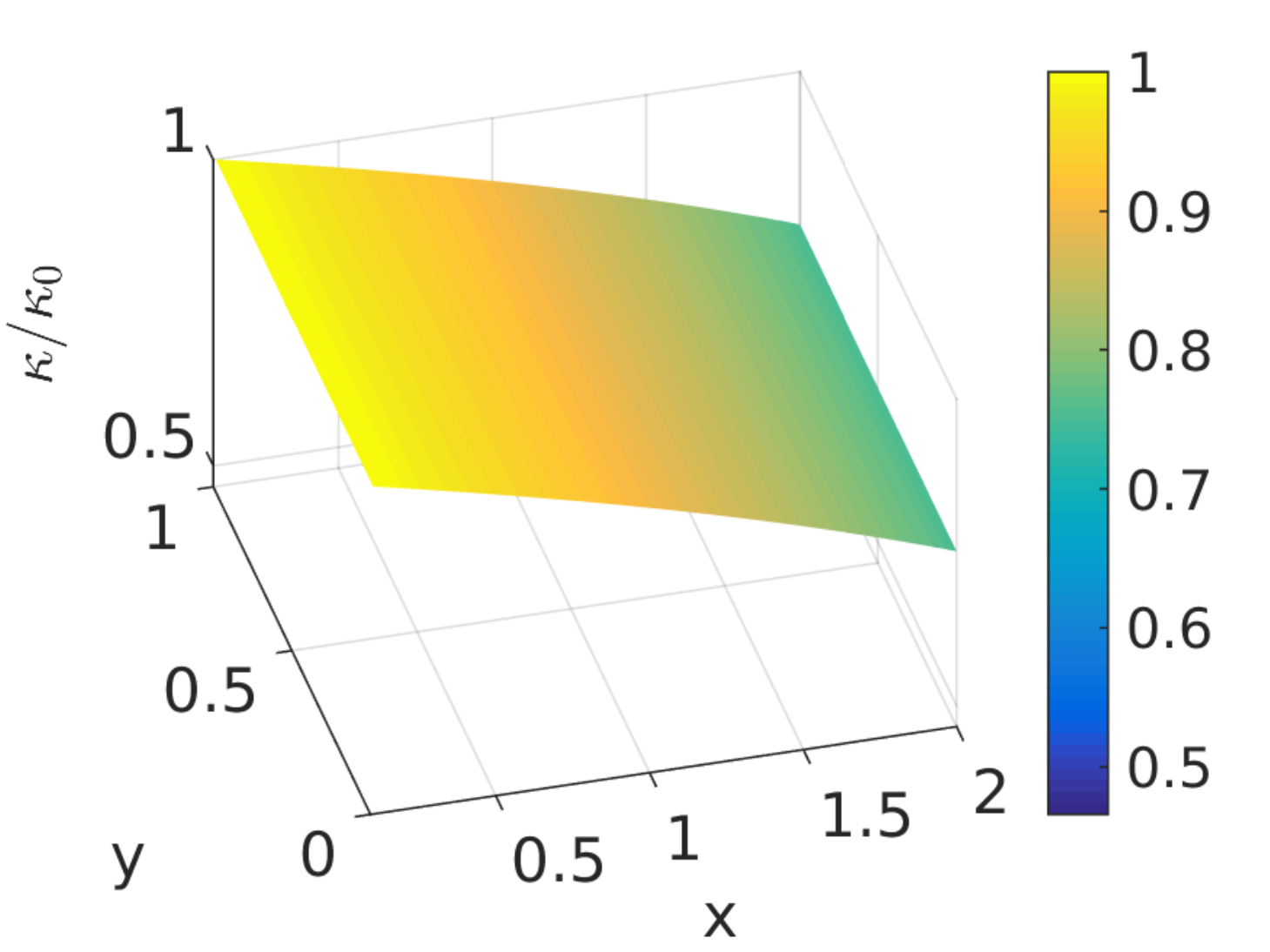}}%
  \qquad
  \subfloat[Numerical solution for the porosity. \label{Fig:Results_p_T_theta}]{\includegraphics[width=0.3\textwidth]{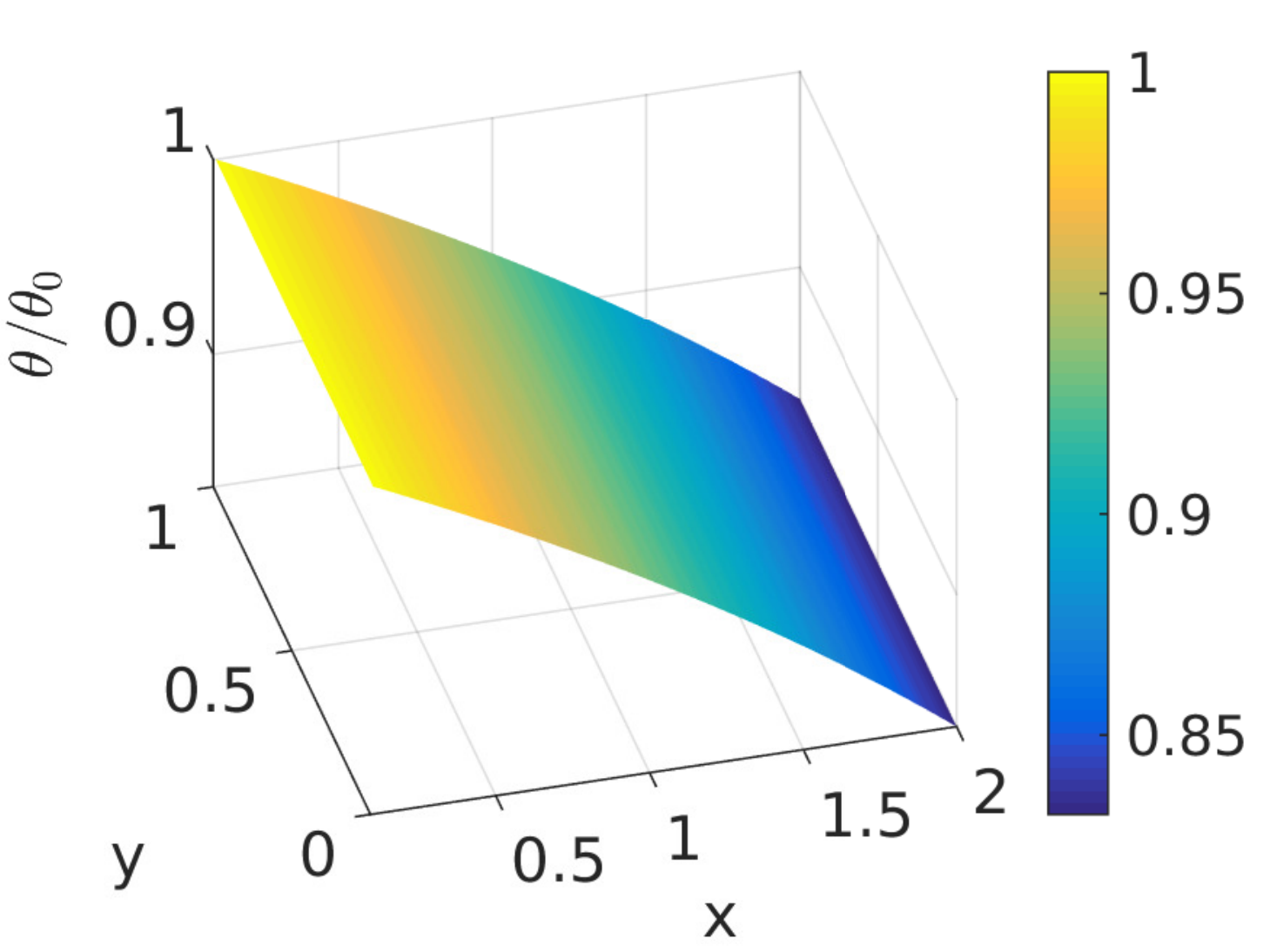}}
  
  \caption{Numerical solutions for the fluid velocity, the permeability and the porosity, at time $t = 300$, obtained using the network-inspired relation with $p_c = 0.3232$.} \label{Fig:Results_p_T}
\end{figure*}

\begin{figure*}[!ht]
  \centering
  \subfloat[Numerical solution for the fluid velocity.]{\includegraphics[width=0.3\textwidth]{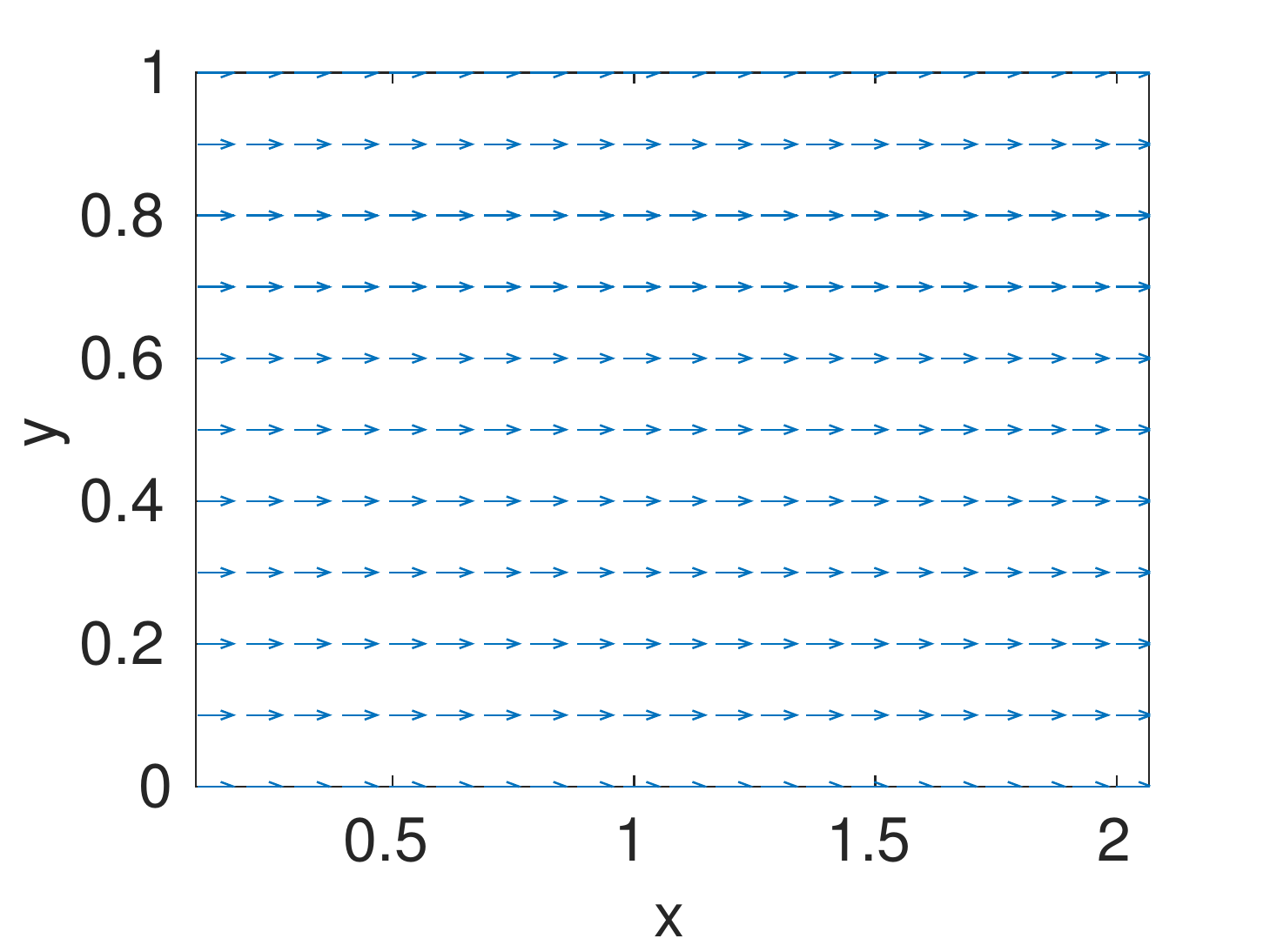}}%
  \qquad
  \subfloat[Numerical solution for the permeability. \label{Fig:Results_p_Q_kappa}]{\includegraphics[width=0.3\textwidth]{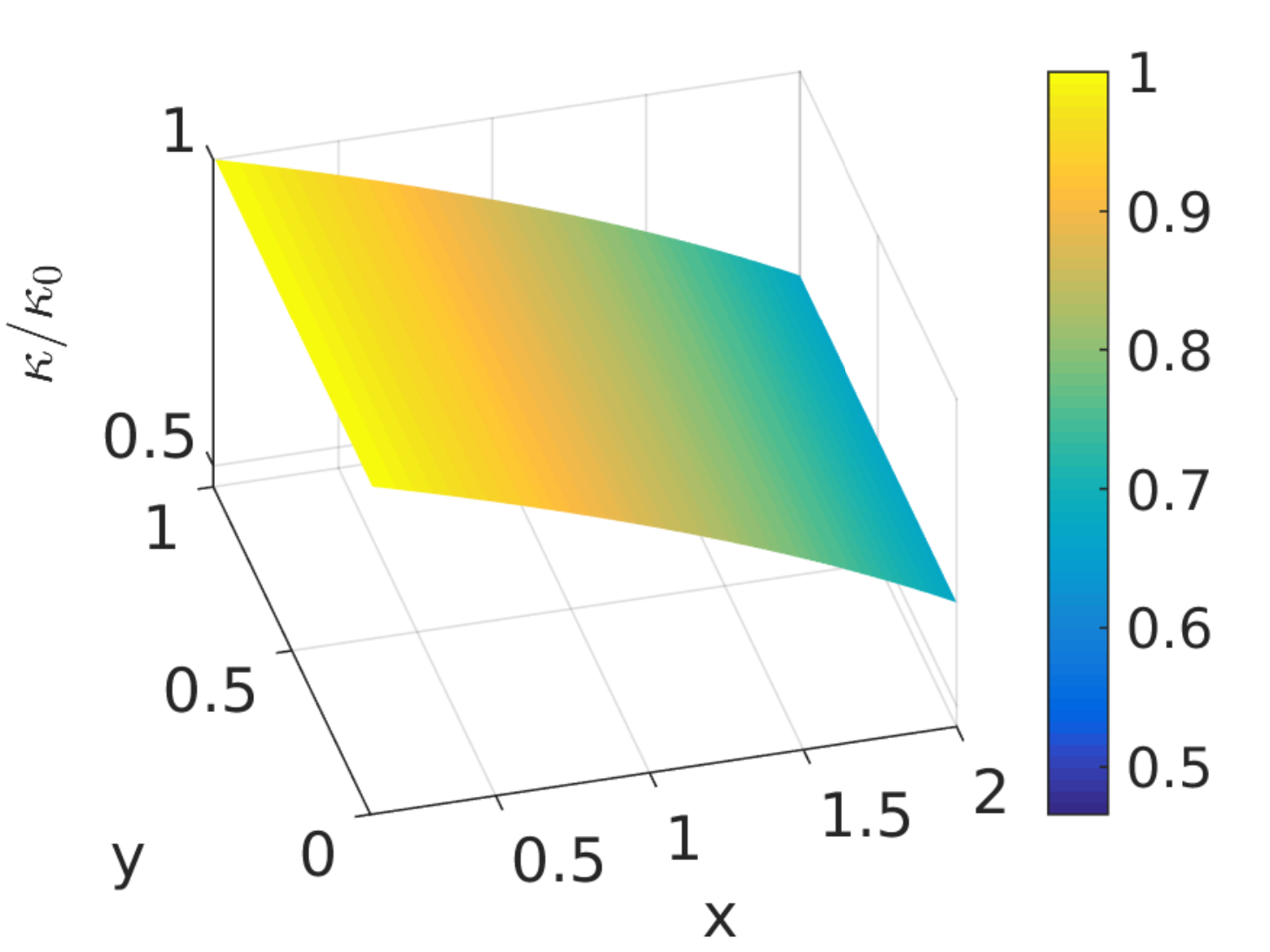}}%
  \qquad
  \subfloat[Numerical solution for the porosity. \label{Fig:Results_p_Q_theta}]{\includegraphics[width=0.3\textwidth]{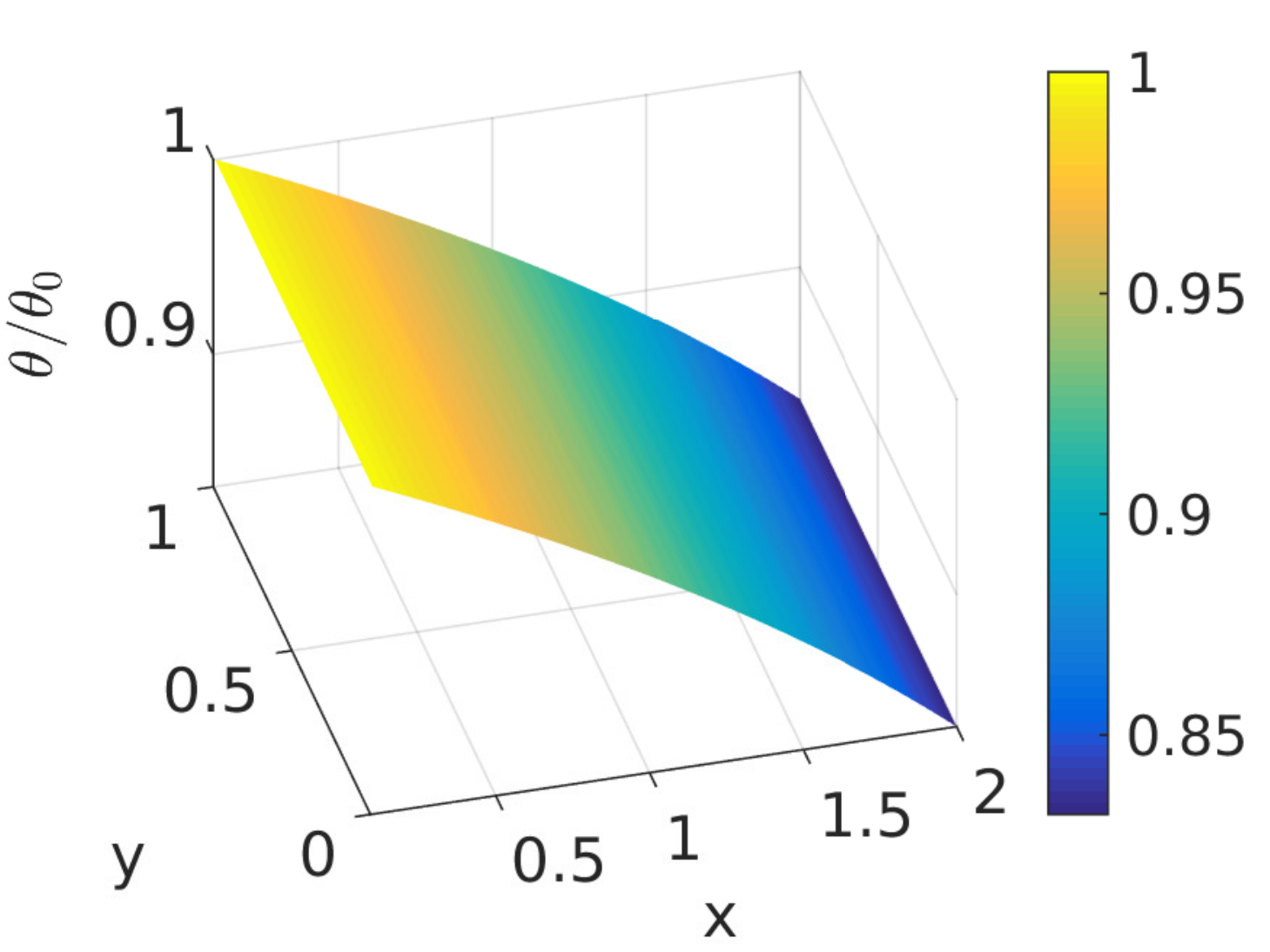}}
  
  \caption{Numerical solutions for the fluid velocity, the permeability and the porosity, at time $t = 300$, obtained using the network-inspired relation with $p_c = 0.4935$.} \label{Fig:Results_p_Q}
\end{figure*}

As shown in these figures, the injected water flows in the horizontal direction through the domain from the inlet to the outlet. Furthermore, the magnitude of the velocity $\abs{v}$ in the inlet is larger than in the outlet. Note that due to the continuity equation~\eqref{Eq:continuity}, the fluxes at the inlet and the outlet are not necessarily equal. The change in $\abs{v}$ can be explained by the permeability profiles shown in Figs.~\ref{Fig:Results_p_KC_kappa}-\ref{Fig:Results_p_Q_kappa}. In these figures we observe that the permeability decreases almost linearly from the inlet to the outlet. In addition, the permeability obtained using the Kozeny-Carman relation exhibits a larger decrease than the permeabilities obtained with the network-inspired model. The normalised permeability using the Kozeny-Carman relation decreases from 1 to 0.4659, while the normalised permeabilities for the network-inspired relation decrease from 1 to 0.7519 and from 1 to 0.6684 for the triangular structured and the rectangular network respectively. This behaviour is clarified by Fig.~\ref{fig_6.40} and Figs.~\ref{Fig:Results_p_KC_theta}-\ref{Fig:Results_p_Q_theta}. Due to boundary condition~\eqref{Eq:BC1_5}, the values of the normalised porosity in all three cases are almost the same in the outlet and are equal to 0.8321. In Fig.~\ref{fig_6.40}, we see that for this value of the porosity, the normalised permeability is the lowest for the Kozeny-Carman relation and the highest for the network-inspired relation derived from the triangular structured network. This explains the difference in decrease in the permeability profiles. Note that the values of the material properties in Table~\ref{Tab:MaterialValues} belong to a porous medium consisting of sand grains, hence a high pump pressure will apparently not result in hydraulic fracturing. This study therefore neglects this phenomenon.

In Fig.~\ref{Fig:Results_Q_p_fine}, the time average of the volumetric flow rate $\overline{Q}_{out}$ is depicted for different values of the percolation threshold. As expected from Fig.~\ref{fig_6.40}, for low percolation thresholds the network-inspired relation results in higher flow rates than the Kozeny-Carman relation. Furthermore, the flow rate changes significantly as a function of the percolation threshold. Hence the water flow depends on the topology of the network. The negative values of the flow rate $\overline{Q}_{out}$ for the very high values of the percolation threshold from the network-inspired relation are caused by violation of the M-matrix property that gives loss of monotonicity of the numerical solution. To demonstrate this, the problem is solved for a coarser grid $\Delta x = \Delta y = 0.04$, as shown in Fig.~\ref{Fig:Results_Q_p_coarse}. In this figure, it becomes even worse, and we observe spurious oscillations for the network-inspired relation for percolation thresholds larger than $0.85$ approximately. The reason for this behaviour is that for large values of the percolation threshold, the permeability goes to zero very soon. Therefore, even the used stabilisation did not diminish the nonphysical oscillations. Probably a stronger stabilisation will alleviate these spurious oscillations. This was behind the scope of the current paper.

\begin{figure*}[!ht]
  \centering
  \includegraphics[scale=0.2]{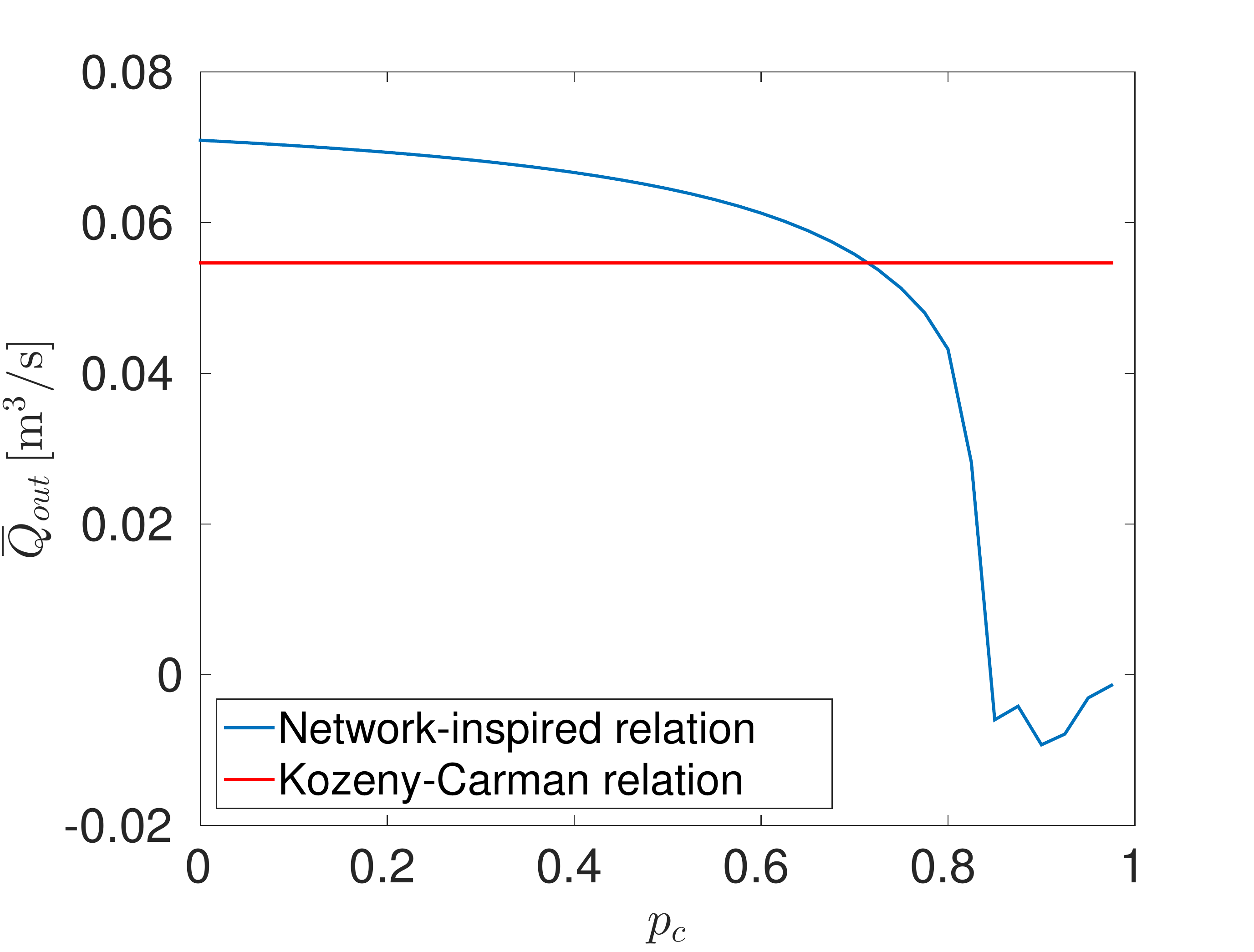}
  \caption{The time average of the volumetric flow rate $\overline{Q}_{out}$ as a function of the percolation threshold $p_c$, using $\Delta x = \Delta y = 0.02$.} \label{Fig:Results_Q_p_fine}
\end{figure*}

\begin{figure*}[!ht]
  \centering
  \includegraphics[scale=0.2]{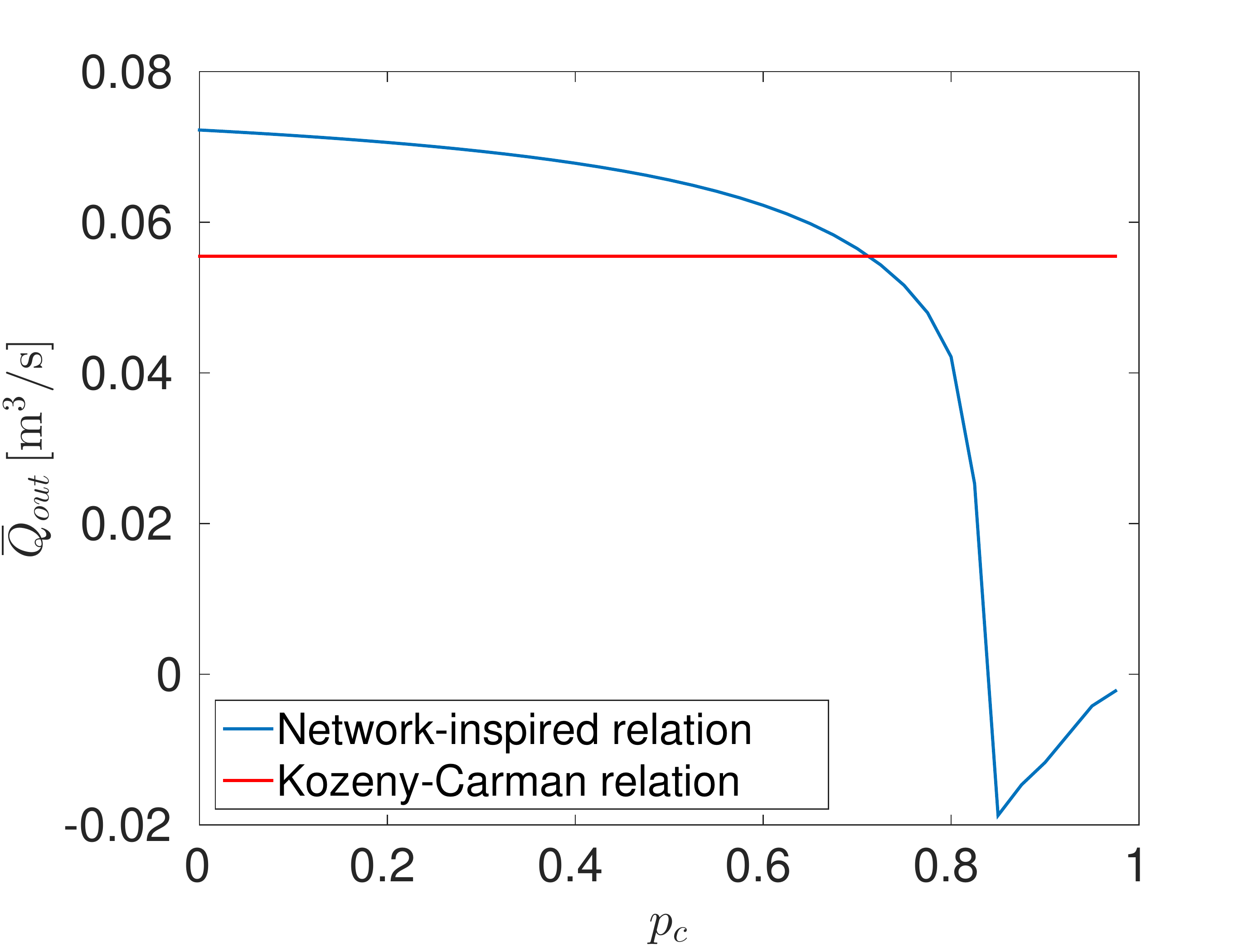}
  \caption{The time average of the volumetric flow rate $\overline{Q}_{out}$ as a function of the percolation threshold $p_c$, using $\Delta x = \Delta y = 0.04$.} \label{Fig:Results_Q_p_coarse}
\end{figure*}

\subsection{Numerical results for the squeeze problem}

The impact of the imposed vertical load on boundary segments~$\Gamma_1$ and~$\Gamma_5$ is shown in Figs.~\ref{Fig:Results_sigma_KC} -~\ref{Fig:Results_sigma_Q}, using the Kozeny-Carman relation and the network-inspired relation respectively. In these simulations, water is injected into the porous medium at a constant pump pressure equal to $5.0\, \mathrm{bar}$. The simulated fluid velocity, permeability and porosity profiles that are obtained using the Kozeny-Carman relation are provided in Fig.~\ref{Fig:Results_sigma_KC}, while the simulated results that are obtained using the network-inspired relation with $p_c = 0.3232$, corresponding with a triangular structured network, are provided in Fig.~\ref{Fig:Results_sigma_T}.  In Fig.~\ref{Fig:Results_sigma_Q}, the simulated results that are obtained using the network-inspired relation with $p_c = 0.4935$, corresponding with a rectangular network, are depicted.

\begin{figure*}[!ht]
  \centering
  \subfloat[Numerical solution for the fluid velocity. \label{Fig:Results_sigma_KC_v}]{\includegraphics[width=0.3\textwidth]{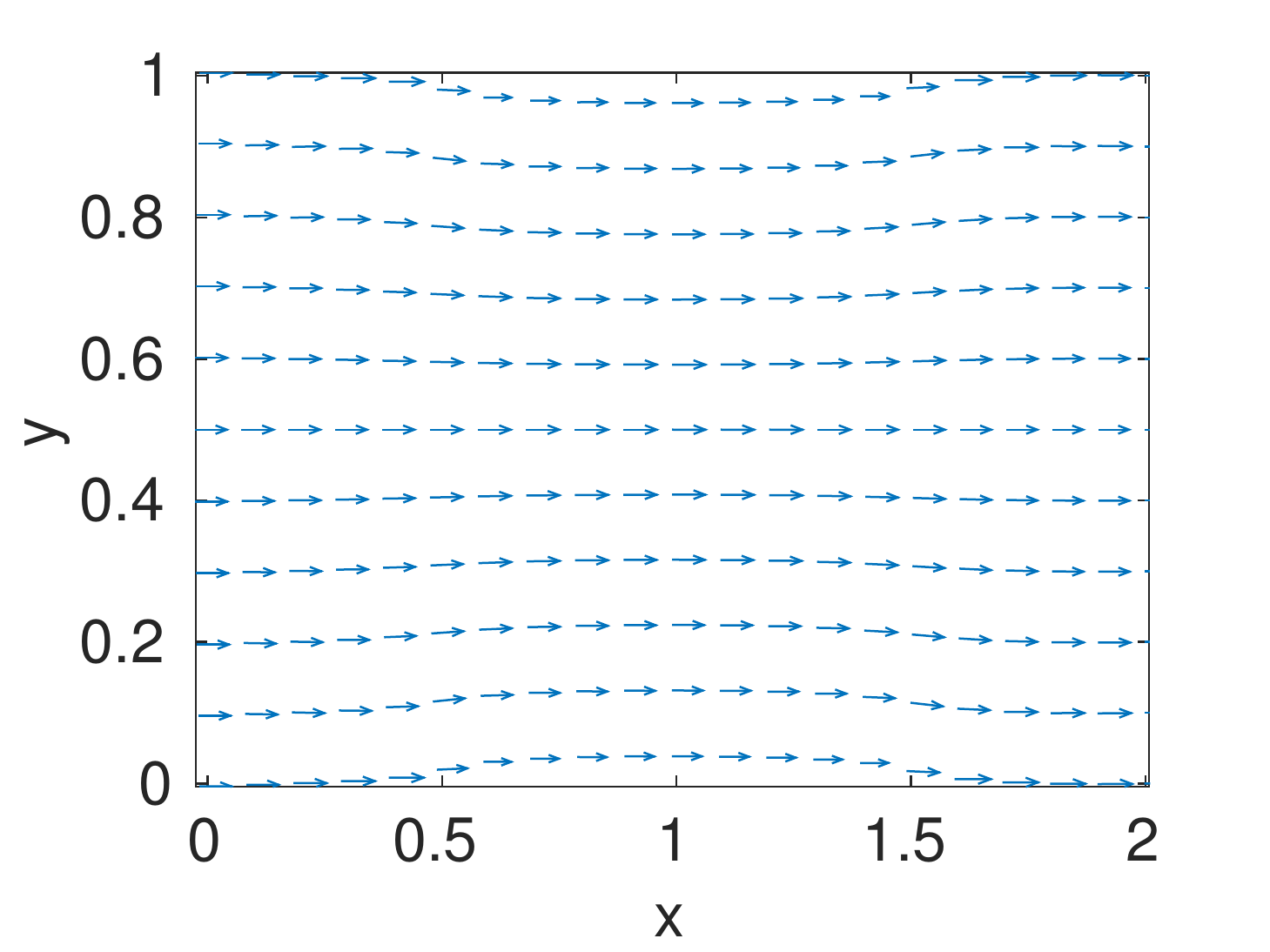}}%
  \qquad
  \subfloat[Numerical solution for the permeability. \label{Fig:Results_sigma_KC_kappa}]{\includegraphics[width=0.3\textwidth]{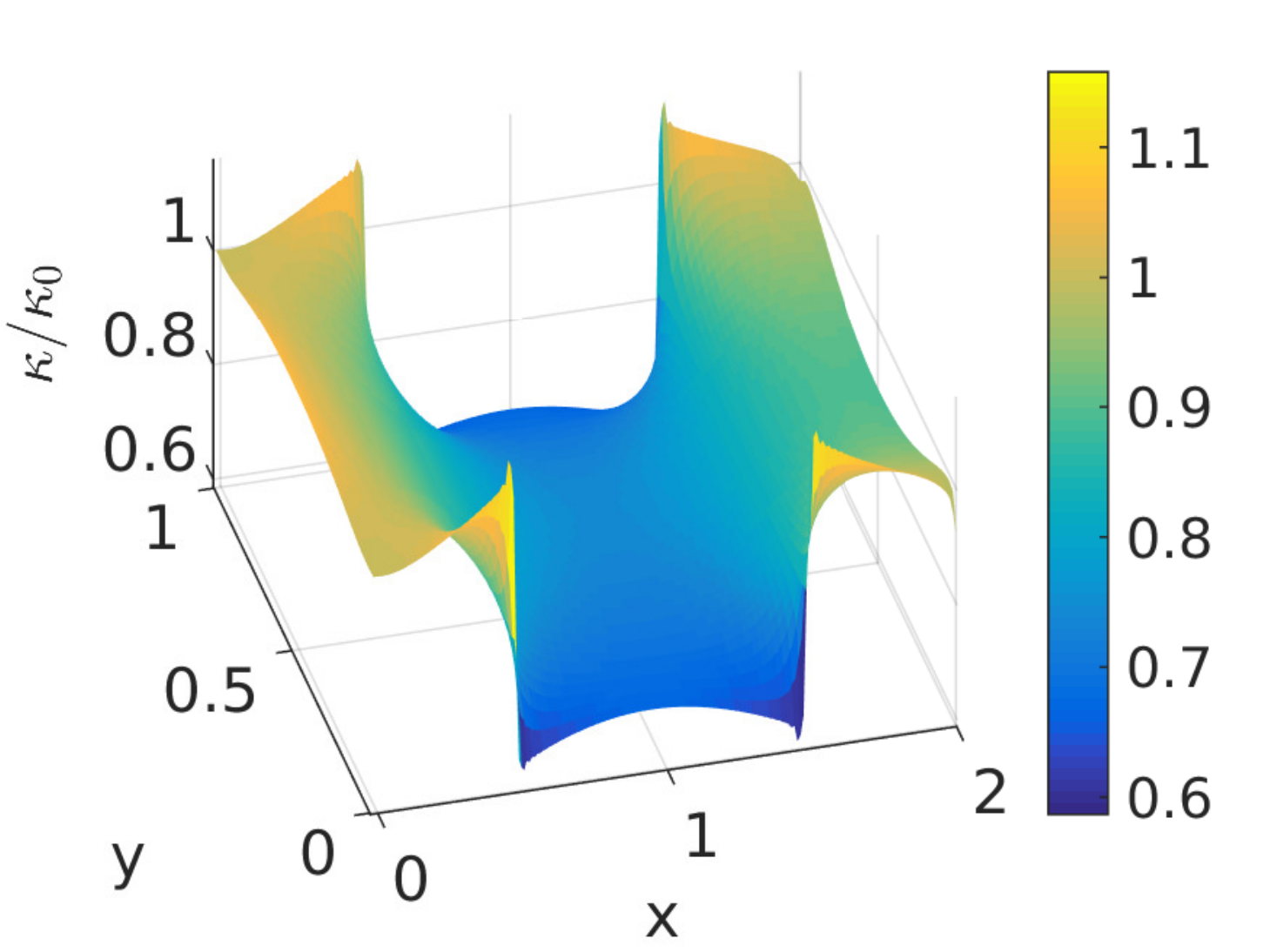}}%
  \qquad
  \subfloat[Numerical solution for the porosity.]{\includegraphics[width=0.3\textwidth]{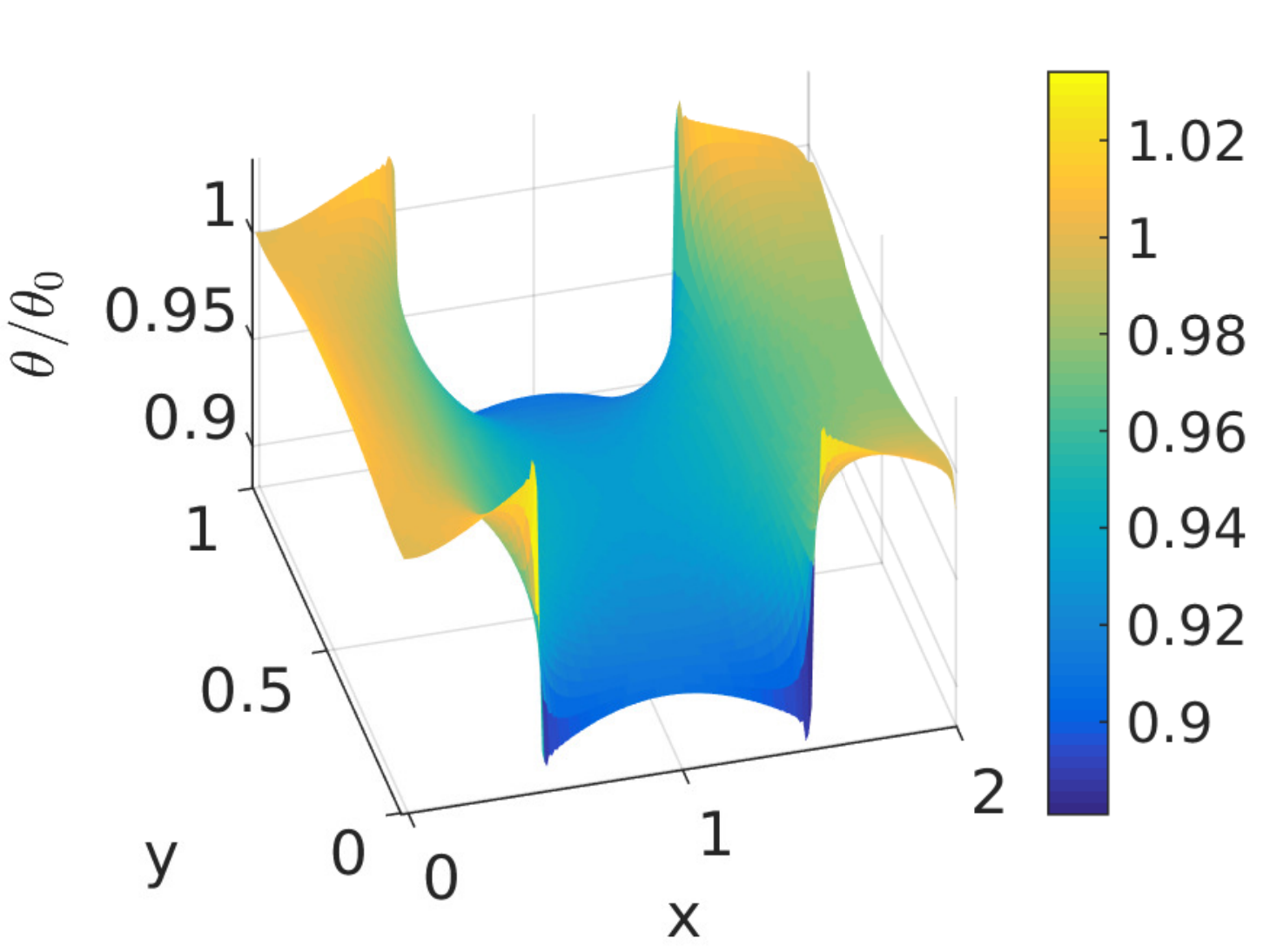}}
  
  \caption{Numerical solutions for the fluid velocity, the permeability and the porosity, at time $t = 300$, obtained using the Kozeny-Carman relation.} \label{Fig:Results_sigma_KC}
\end{figure*}

\begin{figure*}[!ht]
  \centering
  \subfloat[Numerical solution for the fluid velocity. \label{Fig:Results_sigma_T_v}]{\includegraphics[width=0.3\textwidth]{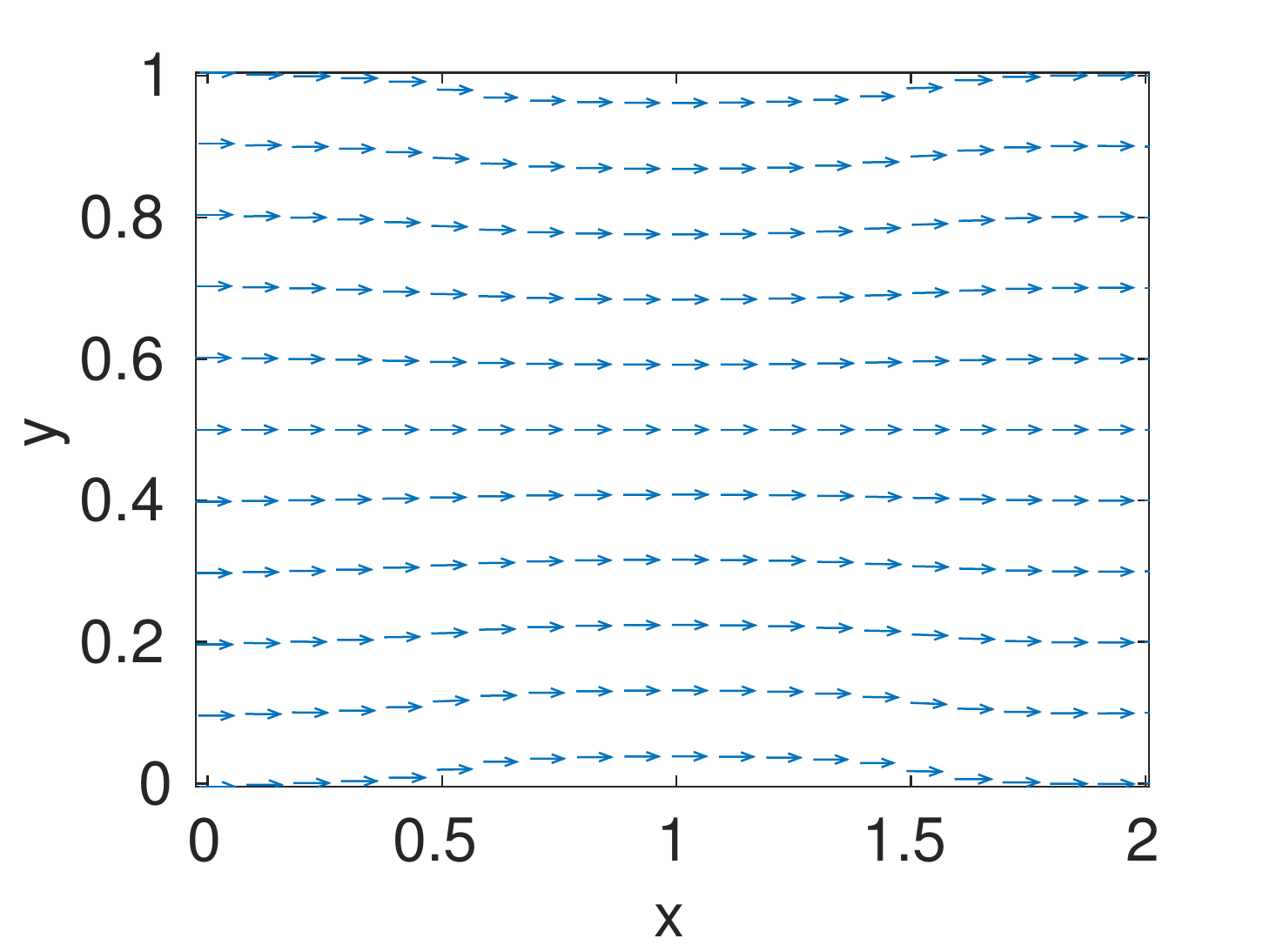}}%
  \qquad
  \subfloat[Numerical solution for the permeability. \label{Fig:Results_sigma_T_kappa}]{\includegraphics[width=0.3\textwidth]{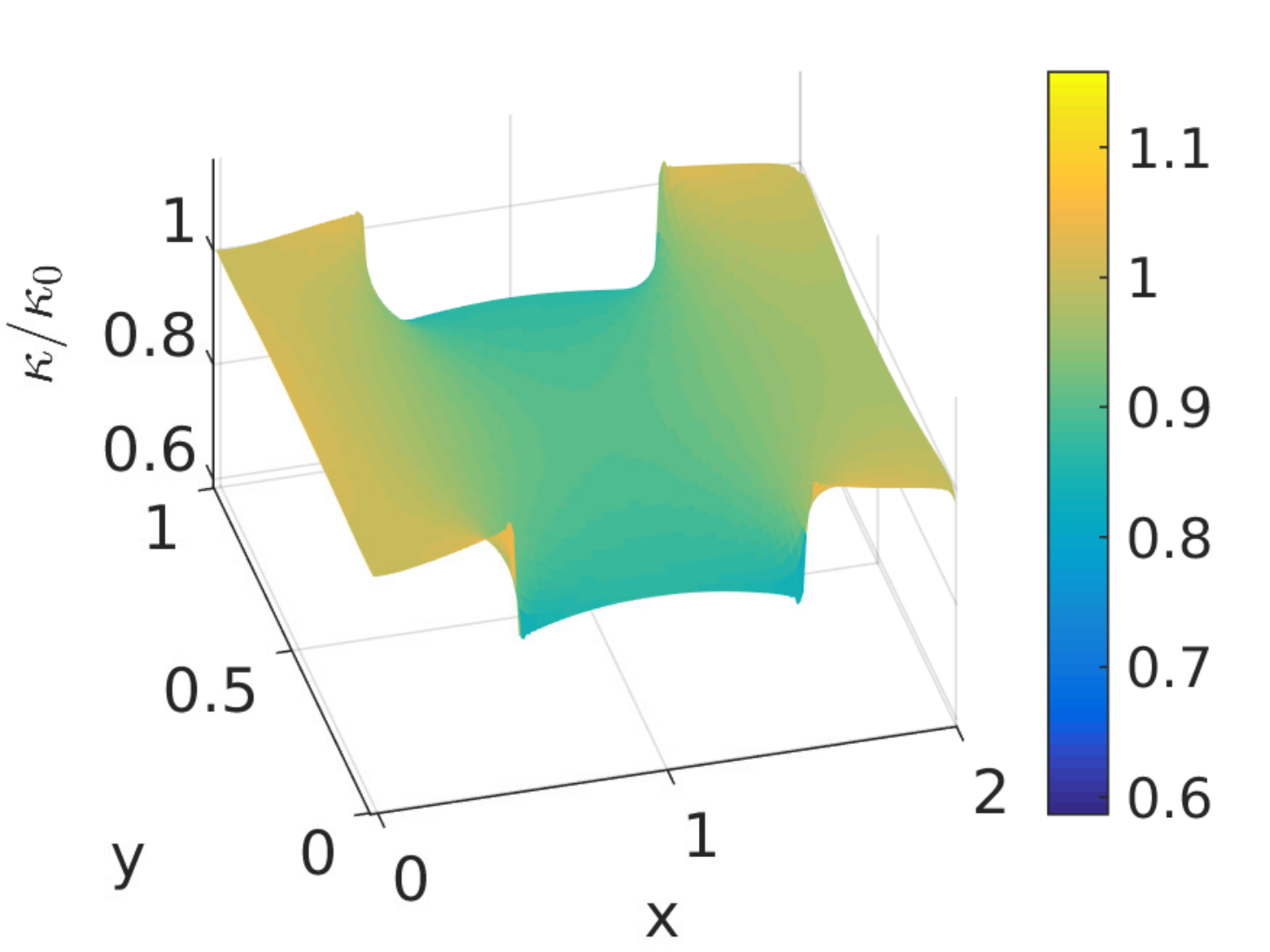}}%
  \qquad
  \subfloat[Numerical solution for the porosity.]{\includegraphics[width=0.3\textwidth]{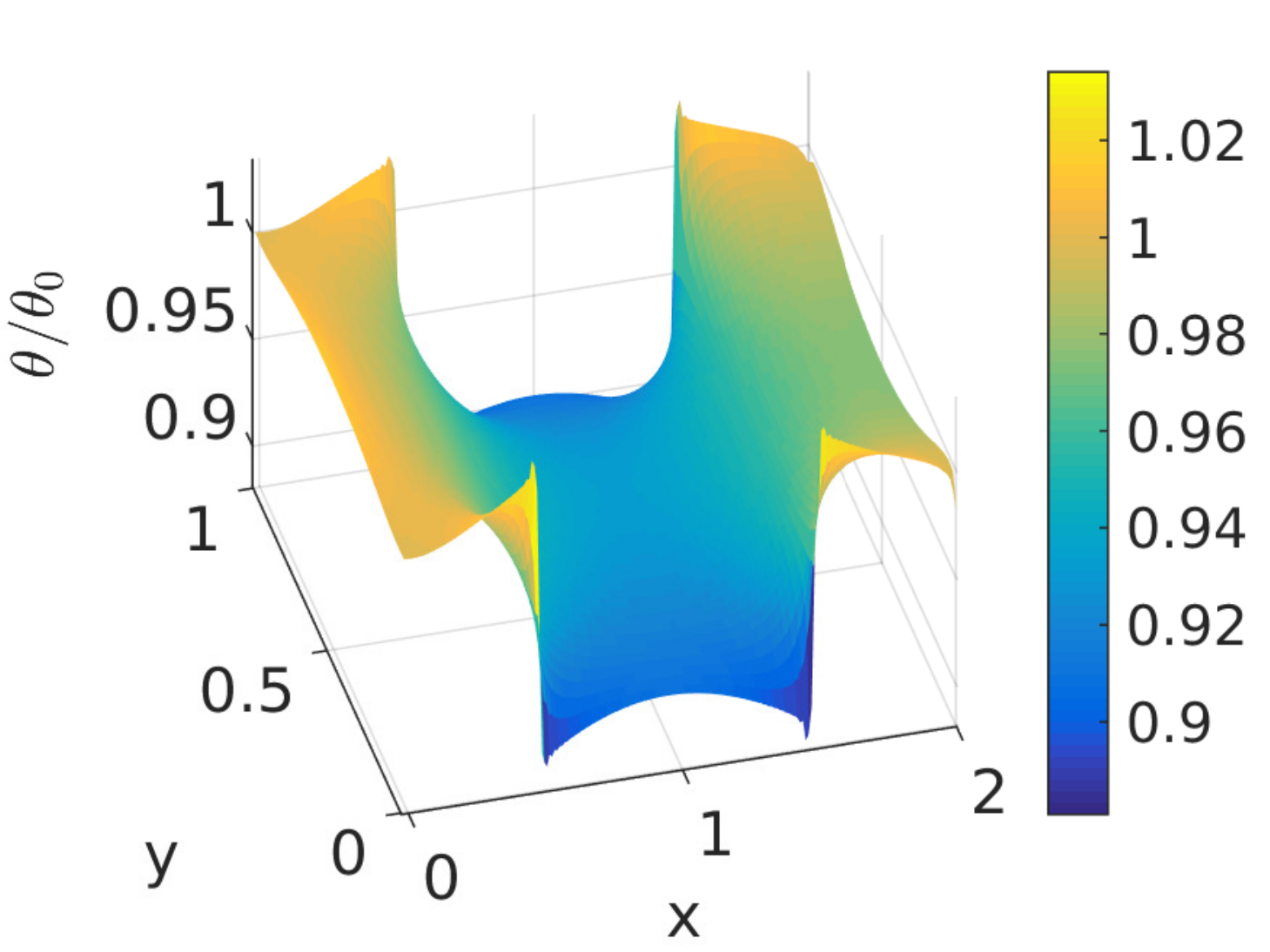}}
  
  \caption{Numerical solutions for the fluid velocity, the permeability and the porosity, at time $t = 300$, obtained using the network-inspired relation with $p_c = 0.3232$.} \label{Fig:Results_sigma_T}
\end{figure*}
 
\begin{figure*}[!ht]
  \centering
  \subfloat[Numerical solution for the fluid velocity. \label{Fig:Results_sigma_Q_v}]{\includegraphics[width=0.3\textwidth]{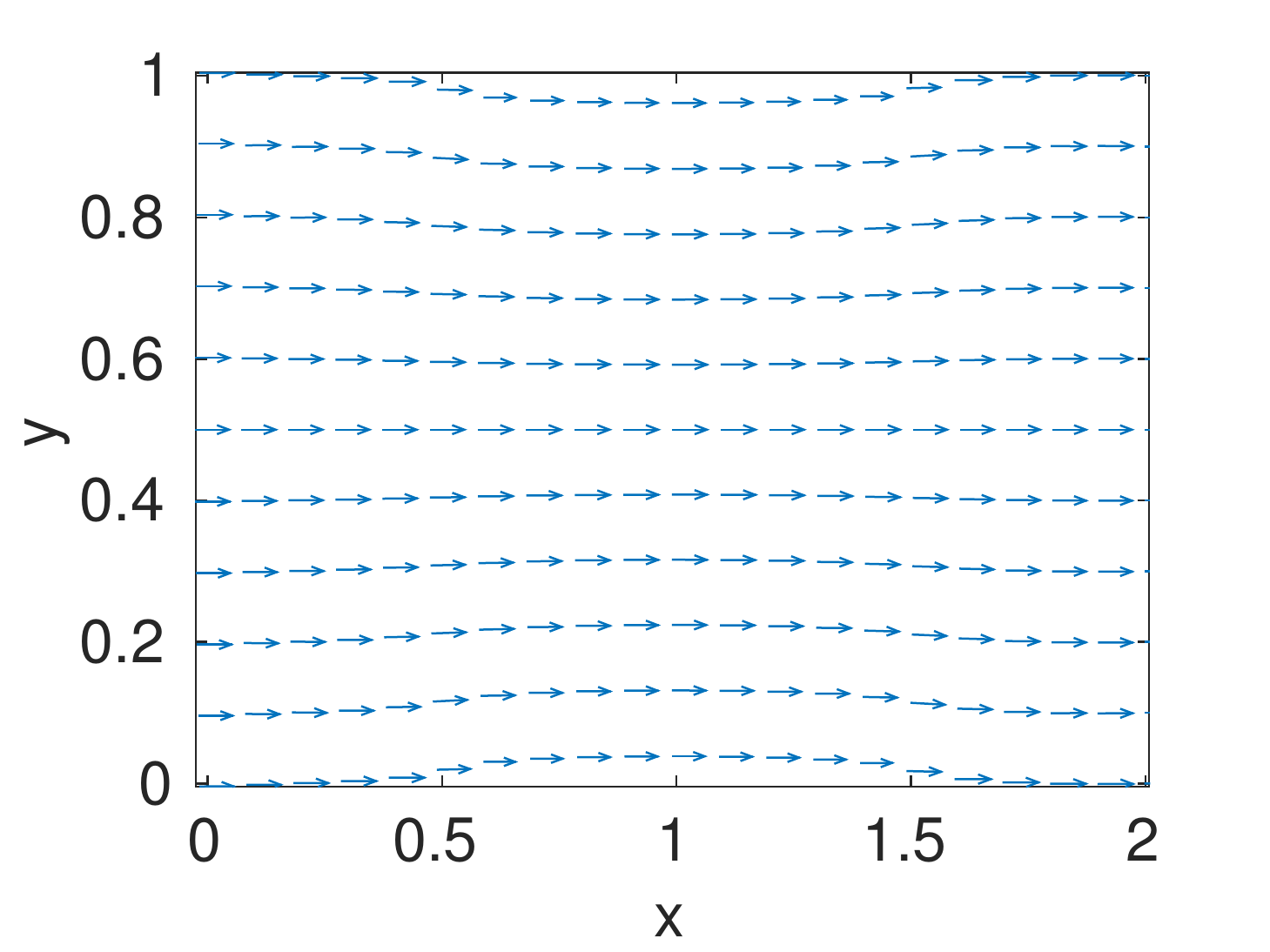}}%
  \qquad
  \subfloat[Numerical solution for the permeability. \label{Fig:Results_sigma_Q_kappa}]{\includegraphics[width=0.3\textwidth]{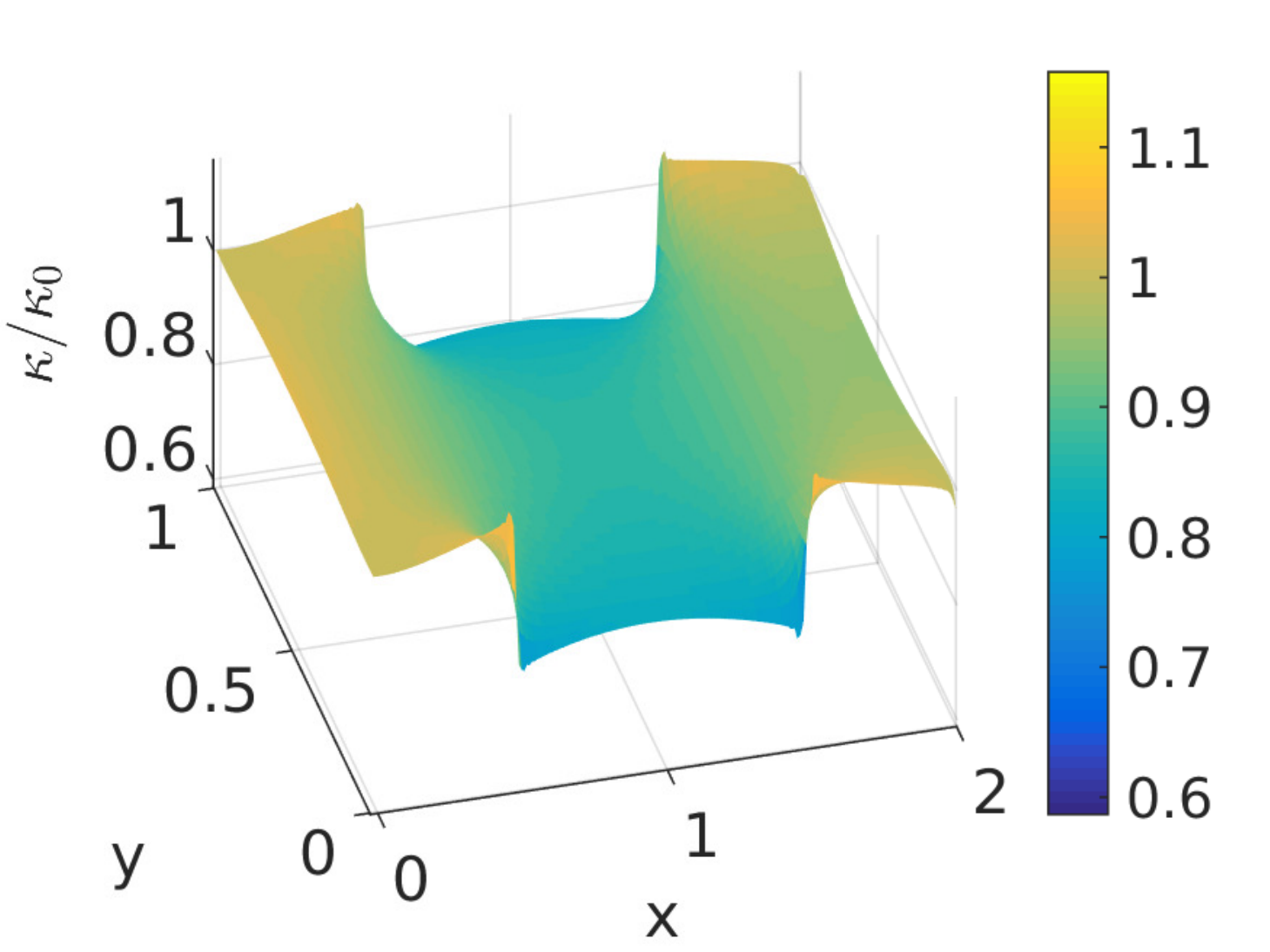}}%
  \qquad
  \subfloat[Numerical solution for the porosity.]{\includegraphics[width=0.3\textwidth]{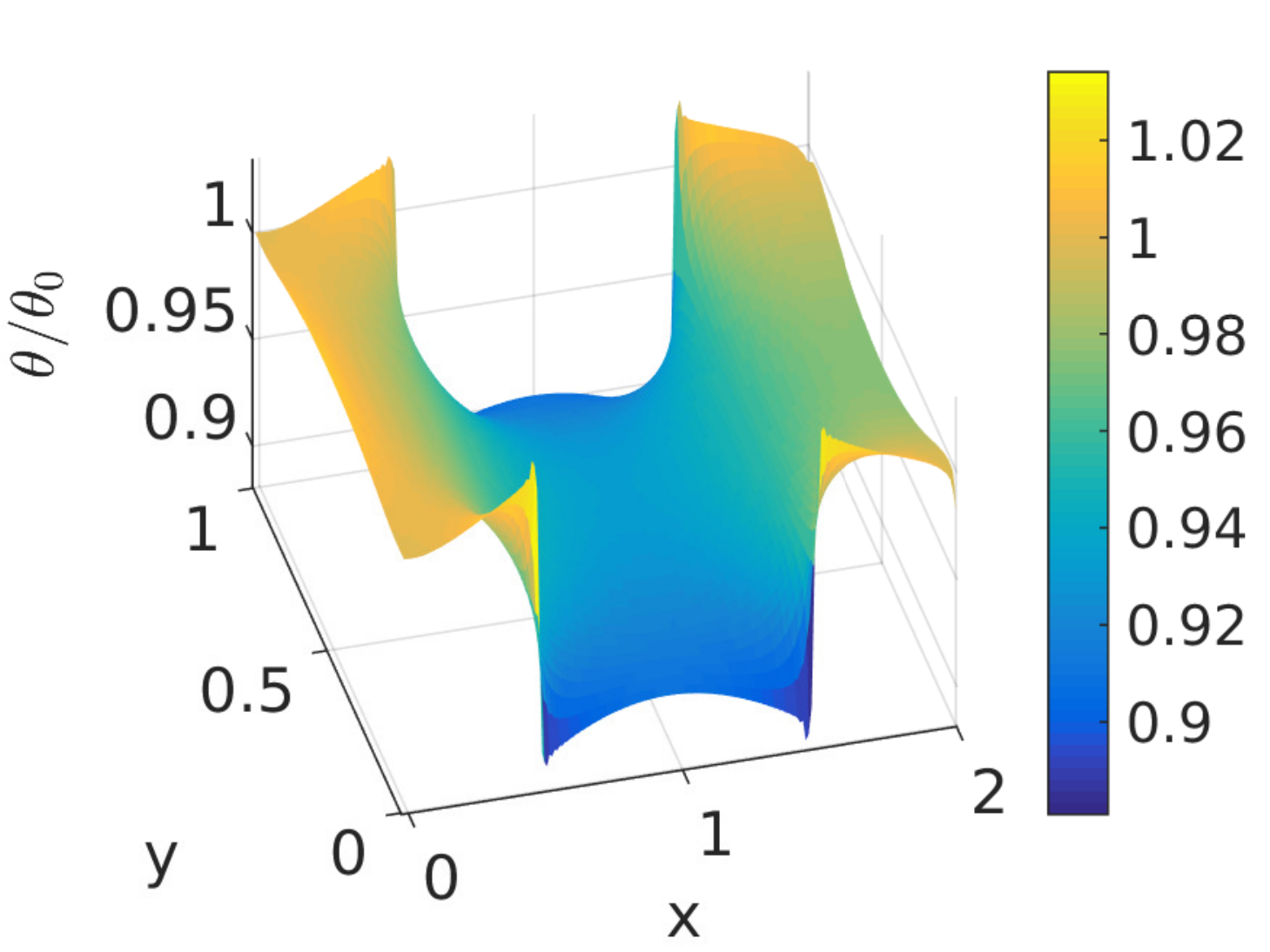}}
  
  \caption{Numerical solutions for the fluid velocity, the permeability and the porosity, at time $t = 300$, obtained using the network-inspired relation with $p_c = 0.4935$.} \label{Fig:Results_sigma_Q}
\end{figure*}

In Figs.~\ref{Fig:Results_sigma_KC_v}-\ref{Fig:Results_sigma_Q_v}, the impact of the imposed vertical load on the computational domain is shown. The magnitude of the velocity is small near the boundary segments where the vertical load is applied and large in the middle near the symmetry axis $y = H/2$. In the outlet, the magnitude of the velocity is maximal near the upper and lower boundary segments and decreases towards the symmetry axis. In all three cases, the fluid flows mainly in the horizontal direction. In the region where the load is imposed, the domain is squeezed, resulting in a larger density of the grains. This leads to a lower porosity in this region, with minimum values 0.8809, 0.8811 and 0.8810 for the Kozeny-Carman relation, the triangular structured network-inspired relation and the rectangular network-inspired relation respectively. The abrupt transition in the boundary condition between boundary segments $\Gamma_1$ and $\Gamma_2$ (and between $\Gamma_4$ and $\Gamma_5$), which results in a discontinuous force, results in a small numerical artefact at the location of these transitions. As expected from Fig.~\ref{fig_6.40} and the minimum values for the porosities, the decrease in permeability by the Kozeny-Carman relation, as shown in Fig.~\ref{Fig:Results_sigma_KC_kappa}, is larger than the decrease in the porosities obtained using the network-inspired relation, Figs.~\ref{Fig:Results_sigma_T_kappa} and~\ref{Fig:Results_sigma_Q_kappa}.

In Fig.~\ref{Fig:Results_Q_sigma}, the time average of the volumetric flow rate $\overline{Q}_{out}$ is depicted for different values of the percolation threshold. Similarly to the high pump pressure problem, the flow rates for small values of the percolation threshold using the network-inspired relation are higher than the flow rates obtained using the Kozeny-Carman relation. In addition, we observe that the flow rate depends significantly on the percolation threshold and hence on the topology of the network for large percolation thresholds.

\begin{figure*}[!ht]
  \centering
  \includegraphics[scale=0.2]{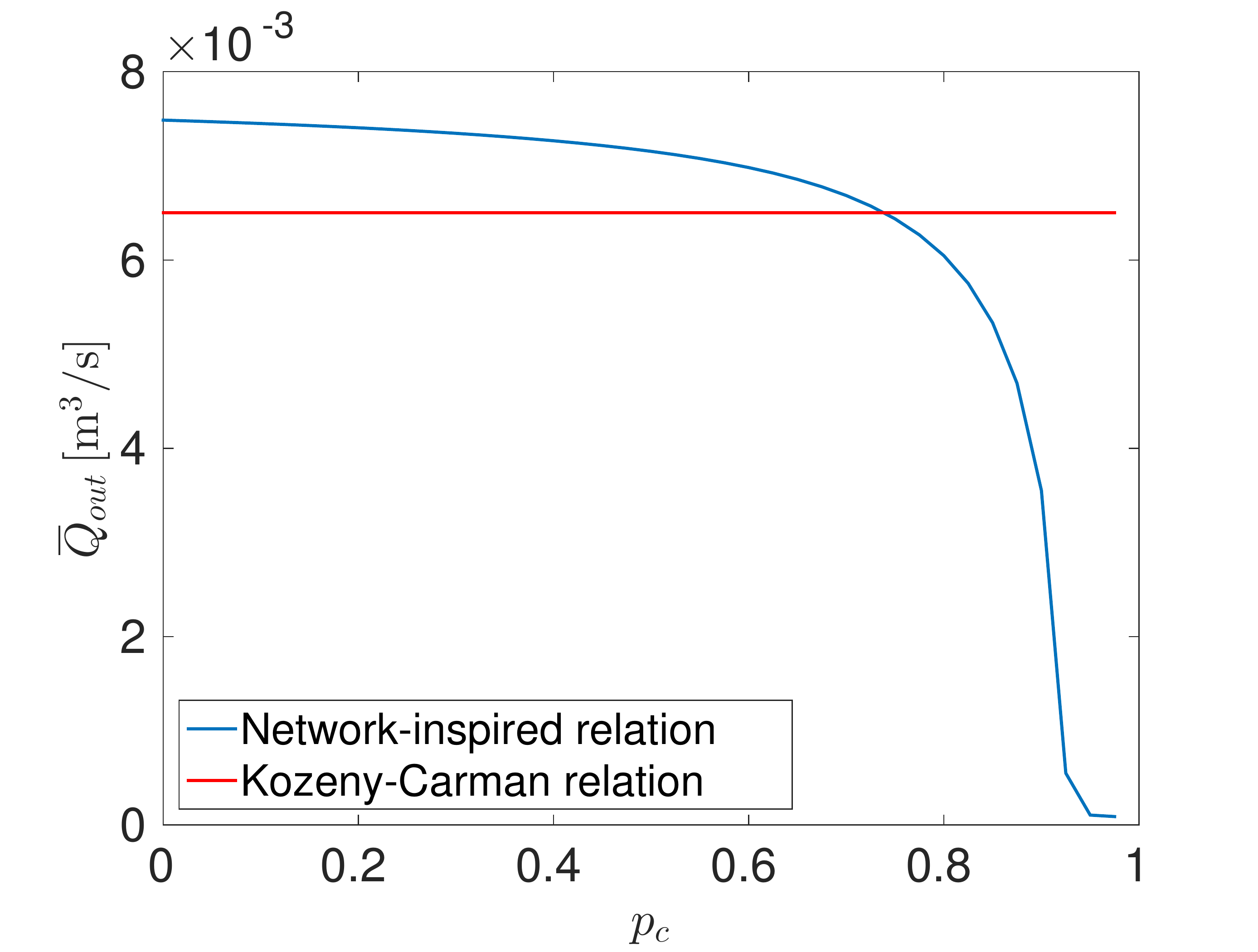}
  \caption{The time average of the volumetric flow rate $\overline{Q}_{out}$ as a function of the percolation threshold $p_c$.} \label{Fig:Results_Q_sigma}
\end{figure*}

% Discussion and conclusions %%%%%%%%%%%%%%%%%%%%%%%%%%%%%%%%%%%%%%%%%%

\section{Discussion and conclusions}

In this paper, the network-inspired permeability-porosity relation is applied on two poroelasticity problems. This numerical experiment is designed in order to analyse the applicability of this microscopic relation on the macro-scale. Furthermore, we compare the results obtained with the network-inspired relation to the Kozeny-Carman relation which is often used in these physical problems. In the first problem, a high pump pressure is imposed in the inlet of a porous medium package. This high pressure forces the grains to move towards the outlet. In the second problem the package is squeezed by applying a load on the middle of the top and bottom edges of the domain. The purpose of considering these poroelasticity problems is to create a large density of the grains in the computational domain which results in a decrease of the porosity. In these problems, Biot's model for poroelasticity is used to determine the water pressure and the displacements of the grains that are needed to compute the porosity. From the porosity the permeability is determined either by the network-inspired relation or by the Kozeny-Carman relation. Depending on the topology, three different percolation thresholds, corresponding with a rectangular network ($p_c = 0.4935$), triangular structured network ($p_c = 0.3232$) and triangular unstructured network ($p_c = 0.3438$), are distinguished. However, since the topology of macro-scale porous media is not known, computations are also performed with percolation thresholds in the interval $[0, 0.975]$ to investigate the influence of the percolation threshold (and hence the topology of the porous medium) on the flow rate.

First, the problems are solved with the Kozeny-Carman relation, the network-inspired relation based on the triangular structured network and the relation based on the rectangular network. From the numerical results we conclude that the permeability obtained using the Kozeny-Carman relation exhibits a larger decrease than the permeabilities obtained with the network-inspired relations, which is clarified by Fig.~\ref{fig_6.40}. In contrast, the porosity profile is not affected significantly by the selected permeability-porosity relation. Second, the time average of the volumetric flow rate was computed for percolation thresholds in the interval $[0, 0.975]$. For low percolation thresholds the network-inspired relation results in higher flow rates than the Kozeny-Carman relation, as expected from Fig.~\ref{fig_6.40}. In addition, it is shown that the flow rate changes significantly as a function of the percolation threshold which means that the water flow depends on the topology of the network. For large percolation thresholds, spurious oscillations appeared due to the violation of the M-matrix property in the discretisation matrix that resulted from the convergence of Biot's problem to the related saddle point problem, as proven in Sect.~\ref{Sec:Proof}. The results for these percolation thresholds could be improved by using a finer grid.

For the studied problems and the set of parameters chosen, we noticed that the applied permeability-porosity relations result in small changes in the porosity while a major change is realised in the permeability profiles. A possible explanation for this behaviour is that the relation between the velocity field and the change of the displacements in time as stated in Eq.~\eqref{Eq:continuity}, is not strong enough to lead to significant changes in the porosity profile.

\paragraph{Acknowledgements} The work of M. Rahrah was supported by the Netherlands Organisation for Scientific Research NWO (project number 13263). The work of L.A. Lopez-Pe\~na was supported by the Mexican Institute of Petroleum (IMP) through the Programa de Captaci\'on de Talento, Reclutamiento, Evaluaci\'on y Selecci\'on de Recursos Humanos (PCTRES) grant.

\bibliographystyle{abbrv}
\bibliography{my_bib_PorPer}

\end{document}